\theoremstyle{plain} 
\newtheorem{theorem}{Теорема}
\newtheorem*{thAa}{Теорема A1}
\newtheorem*{thAb}{Теорема A2} 
\newtheorem*{thAc}{Теорема A3}
\newtheorem*{thAd}{Теорема A4}
\newtheorem{lemma}{Лемма}[section]
\newtheorem{propos}{Предложение}[section] 
\newtheorem{corollary}{Следствие}[section] 
\theoremstyle{definition}
\newtheorem{definition}{Определение}[section] 
\newtheorem{remark}{Замечание}[section]
\theoremstyle{plain} 
\newtoks\thehProclaim 
\newtheorem*{Proclaim}{\the\thehProclaim}
\theoremstyle{definition} 
\newtoks{\thehRemark} \newtheorem*{Remark}{\the\thehRemark}
\renewcommand{\leq}{\leqslant} 
\renewcommand{\geq}{\geqslant}
\newcommand{\rad}{\text{\tiny\rm rad}}
\newcommand{\bal}{\rm {bal}}
\newcommand{\Bal}{\rm {Bal}}
\newcommand{\RR}{\mathbb{R}} 
\newcommand{\CC}{\mathbb{C}} 
\newcommand{\NN}{\mathbb{N}} 
\newcommand{\ZZ}{\mathbb{Z}}
\newcommand{\e}{\varepsilon}
\newcommand{\const}{{\rm const}}
\DeclareMathOperator{\Har}{har} 
\DeclareMathOperator{\Hol}{Hol} 
\DeclareMathOperator{\Poi}{P}
\DeclareMathOperator{\Zero}{Zero} 
\DeclareMathOperator{\sbh}{sbh} 
\DeclareMathOperator{\dsbh}{\text{$\delta${\rm -sbh}}} 
\DeclareMathOperator{\supp}{supp} 
\DeclareMathOperator{\type}{type} 
\DeclareMathOperator{\ord}{ord}
\DeclareMathOperator{\up}{\text{\rm \tiny up}}
 \DeclareMathOperator{\lw}{\text{\rm \tiny low}}
\DeclareMathOperator{\sgn}{sgn}
\DeclareMathOperator{\dd}{\,{\mathrm d\!}}
\renewcommand{\Re}{{\rm Re \,}}
\renewcommand{\Im}{{\rm Im \,}}
\begin{document} 
\title[Выметание конечного рода на систему лучей]{Выметание мер и субгармонических функций на систему лучей. II. Выметания конечного рода и  регулярность роста на одном луче}
	
\author{Б.\,Н.~Хабибуллин, А.\,В.~Шмелева, З.\,Ф.~Абдуллина} 
	
\address{факультет математики и ИТ\\ Башкирский государственный университет\\ 450074, г. Уфа\\ ул. Заки Валиди, 32\\ Башкортостан\\
Россия}
	
\email{Khabib-Bulat@mail.ru}
	
\subjclass[2010]{Primary 30D15; Secondary 30D35, 41A30, 31A05}
	
\keywords{целая функция, последовательность нулей, субгармоническая функция, мера Рисса, выметание}	
\begin{abstract} Расширяются классические выметания мер и субгармонических функций на систему лучей $S$ с общим началом на комплексной плоскости $\CC$. Это позволяет для произвольной субгармонической функции $v$ конечного порядка на $\CC$ строить  $\delta$-субгармоническую на $\CC$ функцию, гармоническую вне $S$, совпадающую с $v$ на $S$ вне полярного множества, того же порядка роста, что и $v$.  Приводятся применения к исследованию   взаимосвязи роста целой функции на $S$ с распределением её  нулей. 
В настоящей второй части работы существенно используются результаты и  подготовительной материал ее первой части.

Библиография: 18 названий. 
\end{abstract}
	
\thanks{Исследование выполнено за счёт гранта Российского
научного фонда (проект № 18-11-00002).
}	
\date{5 сентября 2018 г.}
	
\maketitle

\tableofcontents

\section{Введение}\label{s10}
\subsection{Истоки и предмет исследования}\label{111}

Все обозначения и  определения согласованы с первой частью  \cite{KhI} нашей работы и в значительной мере с исходной для нашего исследования статьей первого из авторов \cite{Kha91}. При необходимости мы их напоминаем. В частности, для подмножества $S$ в {\it комплексной плоскости\/} $\CC$ через $\sbh (S)$, $\Har (S)$, $\dsbh(S)$ и $\Hol (S)$ обозначаем классы соответственно {\it субгармонических, гармонических, $\delta$-субгармонических\/} и {\it голоморфных\/}  функций на каких-либо открытых окрестностях  множества $S$. 

Как уже отмечалось в \cite[введение]{KhI}, в отличие от классического случая выметания на {\it замкнутую систему лучей\/}  $S$ с общим началом  в  $\CC$ выметание конечного рода $q=0,1,2,\dots$ позволит сопоставлять каждой мере $\nu$ конечного порядка из класса борелевских положительных мер $\mathcal M^+(\CC)$ на плоскости с помощью определенного интегрального преобразования некоторый заряд $\nu^{\bal [q]}\in \mathcal M(\CC)$ того же порядка, сосредоточенный на $S$ и называемый выметанием рода $q$ меры $\nu$ на $S$. Наиболее важно при этом то, что для каждой функции $v\in \sbh (\CC)$ с мерой  Рисса $\nu$ можно подобрать, вообще говоря, не единственную, $\delta$-субгармоническую функцию, т.\,е. разность  субгармонических функций \cite{Ar_d}--\cite[3.1]{KhaRoz18},  $v^{{\Bal}}\in \dsbh (\CC)$ с зарядом Рисса $\nu^{\bal [q]}$, для которой  $v^{{\Bal}}=v$ на $S$ вне полярного множества, т.\,е. множества нулевой логарифмической ёмкости \cite{HK}, \cite{Rans}, и  функция $v^{{\Bal}}$ гармоническая в  $\CC\setminus S$. Функцию $v^{{\Bal}}$ называем тоже выметанием функции $v$ на систему лучей $S$. Если функция $v$ конечного порядка роста, то ее выметание $v^{{\Bal}}$ можно подобрать того же порядка роста. При $q=0$ это в точности классическое выметание из первой части \cite{KhI} нашей работы.  Подобная техника позволяет при исследовании поведения субгармонической или целой функции 
на системе лучей $S$ ограничиться исследованием $\delta$-субгармонических функций, гармонических вне $S$, с зарядом Рисса на $S$.

Выметание конечного рода мер и субгармонических функций будет использовано для получения новых критериев вполне регулярного роста субгармонической и/или целой функции конечного порядка на произвольной системе лучей $S$ и, прежде всего, на одном луче в терминах выметания распределения масс и/или нулей на эту систему лучей. Некоторый специальный перенос-смещение мер Рисса субгармонической функции на лучи использовался в совместной статье А.\,Ф.~Гришина и  Т.\,И.~Малютиной  \cite{GrM} для получения, в частности, признаков вполне регулярного роста субгармонической функции $v\in \sbh(\CC)$ на луче в терминах некоторых специальных интегралов от  $v$ по этому лучу \cite[теорема 3]{GrM}, но не в терминах меры Рисса этой функции $v$. Дополнительные результаты и библиографические ссылки по исследованию  полной регулярности роста субгармонической и/или целой функции на лучах можно найти в \cite{KhI}, \cite{Kha91}, \cite{GrM}, \cite{Levin56}.

Используемые в работе интегральные преобразования, представления и ядра в них имеют параллели с представлениями голоморфных функций на полуплоскости из монографии Н.\,В.~Говорова \cite[гл.~1, \S~3]{Gov} по форме и духу, но не по содержанию, хотя  в некотором смысле двойственные глубинные взаимосвязи с представлениями Н.\,В.~Говорова отрицать нельзя.

Ниже в подразделе \ref{singleray} сформулированы теоремы  A1, A2, A4 о выметании на систему лучей $S$ и его применениях лишь в частном случае системы лучей $S$, состоящей из единственного луча:  замкнутой {\it положительной полуоси\/}  
$\RR^+$  на {\it вещественной оси\/} $\RR$. Тем не менее большинство результатов, приведенных и обсуждаемых в  подразделе \ref{singleray},  нам ранее не встречалось и, на наш взгляд, уже представляют интерес. Они достаточно детально иллюстрируют   более общие  конструкции и результаты  настоящей второй части работы из \S\S~\ref{bal_S}--\ref{bqsf}.
Самостоятельный интерес представляет также развитие в подразделе 
\ref{CBsf} известных взаимосвязей между распределением масс/нулей субгармонических/целых функций в угле типа условия Бляшке  и поведением интегралов по лучам в терминах условий типа Ахиезера \cite[гл.~V]{Levin56}, основанное на использовании  вариаций на тему  классической леммы Эдрея и Фукса о малых дугах \cite{EF}, \cite[гл. 1, теорема 7.3]{GO},  леммы о малых интервалах А.\,Ф.~Гришина и М.\,Л.~Содина \cite[лемма 3.2]{GrS}   и их аналогов из   \cite[теорема 8]{GrM}. Эти существенно обобщающие вариации 
леммы о малых интервалах даны в теореме A3  из п.~\ref{inti} и в теореме \ref{thr} из подраздела \ref{Intr} для субгармонических функций произвольного роста.
 
\subsection{Выметание конечного рода  на $\RR^+$}
\label{singleray}


\subsubsection{Выметание конечного рода на $\RR^+$ заряда конечного порядка}\label{b1ray} Здесь мы ограничимся лишь случаем выметания на систему лучей $S$, состоящую из одного луча $\RR^+:=\{x\in \RR\colon x\geq 0 \} \subset \CC$.   
Редукция угла $\angle(0 , 2\pi):=\CC\setminus \RR^+$  к верхней полуплоскости $\CC^{\up}:=\{z\in \CC\colon \Im z>0 \}$ с помощью конформной замены переменных 
\begin{equation}\label{tildez0} 
z\mapsto \sqrt z, \quad z\in \CC\setminus \RR^+,\quad \RR_*^+:=\RR^+\setminus \{0\},
\end{equation} 
где при  извлечении квадратного корня рассматривается аналитическая ветвь, положительная на <<верхнем берегу>> положительной полуоси $\RR^+$, позволяет определить гармонический заряд рода $q$ для   $\CC\setminus \RR^+$. Конкретнее, замкнутый угол $\angle [0, 2\pi]$ рассматриваем как угол между двумя <<различными>> сторонами-лучами  $\{te^{0i}\colon t\in \RR^+\}$ и $\{te^{2\pi i}\colon t\in \RR^+\}$ вместе с этими двумя сторонами, т.е. рассматривается один лист римановой поверхности функции $\sqrt{\;}$. Пусть $z\in \CC\setminus \RR^+$ и $\sqrt B$ --- образ пересечения   $B\cap \angle\,[0, 2\pi]\in \mathcal B(\CC)$ с замкнутым углом $\angle\,[0, 2\pi]$   при редукции \eqref{tildez0} угла
$\angle (0, 2\pi)$ к $\CC^{\up}$. При непустом пересечении $B\cap \RR^+$ образ $\sqrt B$ имеет непустое пересечение как с $\RR^+$, так и с  отрицательной полуосью $-\RR^+$.

{\it Гармонический заряд рода $q\in \NN_0:=\NN\cup \{0\}$ для угла $\angle (0, 2\pi)$} --- функция 
$\Omega_{\CC\setminus \RR^+}^{[q]}\colon \angle(0,2\pi)\times \mathcal B_{\rm b} (\CC) \to \RR$, где $\mathcal B_{\rm b} (\CC)$ 
--- класс ограниченных борелевских подмножеств в $\CC$,
определенная через классическую гармоническую меру $\omega (\cdot,\cdot)$ для верхней полуплоскости $\CC^{\up}$  как 
\begin{equation}\label{se:HC+0}
\Omega_{\CC\setminus \RR^+}^{[q]}(z,B):=
\omega(\sqrt z,\RR\cap \sqrt B\,)+\frac{1}{\pi} 
 \int_{\RR\cap \sqrt B}\Im \frac{t^q-(\sqrt z)^q}{(\sqrt z)^{q}(t-\sqrt z)} \dd t .
\end{equation}
Выметание рода\/ $q$ заряда\/ $\nu$ из угла $\angle(0, 2\pi)$ 
на его границу $\partial \angle(0, 2\pi)$, или 
на $\RR^+$, для $B\subset \mathcal B_b(\CC)$ определяется  равенством 
  \begin{equation}\label{df:bal++}
    \nu_{\RR^+}^{\bal [q]}(B):=\int_{\CC\setminus \RR^+} \Omega_{\CC\setminus \RR^+}^{[q]}(z, B\cap\angle(0,2\pi))\dd \nu(z)+
\nu (B\cap \RR^+ ).
  \end{equation} 
Из  теоремы  \ref{th15}, как отмечено в  замечании \ref{remA1},  следует

\begin{thAa}[{\rm ср. с \cite[Лемма 2.1.2]{KhDD92}}]
\label{th15_0} Пусть $\nu\in \mathcal M(\CC)$ --- заряд конечного типа при порядке $p\in \RR^+$ около $\infty$, т.\,е. $\type_p^{\infty}[\nu]{<}+\infty$ в обозначении из \cite[(2.1t), \S~4]{KhI}, $q:=[2p]\in \NN_0$ --- целая часть числа  $2p$ и для некоторого $r_0>0$ имеем  $D(r_0)\cap \supp \nu=\varnothing$, где $D(r):=\bigl\{z\in \CC\colon |z|<r\bigr\}$.
\begin{enumerate}[{\rm 1.}]
\item\label{c:0nuniii+0} Тогда существуют выметание $\nu_{\RR^+}^{\bal[q]}$ из  $\CC\setminus \RR^+$ на $\RR^+$ и  постоянная\footnote{Как и в \cite[1.3.2]{KhI}, через $\const_{a_1, a_2, \dots}$ обозначаем вещественные постоянные, зависящие от $a_1, a_2, \dots$ и, если не
оговорено противное, только от них.} $C:=\const_{\nu,p}$,
для  которых  
\begin{equation*}
\bigl|\nu_{\RR^+}^{\bal[q]}\bigr|^{\rad}(r)\leq C \left(r^p+r^{q/2}\Biggl|\;\int\limits_{D(r)\setminus \RR^+} \Im \frac{1}{(\sqrt z\,)^q}\dd \nu (z)\Biggr|\right)\quad\text{при всех $r\in \RR^+$},
\end{equation*}
где использованы обозначения $|\nu|$ для  полной вариации заряда $\nu$ и $\nu^{\rad}(r):=\nu\bigl(\overline D(r)\bigr)$, $\overline D(r)$ --- замыкание круга $D(r)$.  

\item\label{c:0nui0} При любом  $p\in \RR^+$ выполнено соотношение  
\begin{equation}\label{es:plog+0}
\bigl|\nu_{\RR^+}^{\bal[2p]}\bigr|^{\rad}(r)=O(r^p\log  r)\quad \text{при $r\to +\infty$}.
\end{equation}

\item\label{c:0nuni+0} Если  $2p\in \RR^+\setminus \NN_0$ --- нецелое число,  то 
выметание $\nu_{ \RR^+}^{\bal[q]}$  конечного типа при том же порядке $p$, т.\,е.  $\type_{p}^{\infty}\bigl[\nu_{ \RR^+}^{\bal[q]}\bigr]<+\infty$.  
 
\item\label{c:0nuiii+0} Если  
$2p\in \NN_0$ и для заряда $\nu$ в угле $\angle(0,2\pi)$  выполнено условие\/ Бляшке рода $p$, а именно:
\begin{equation}\label{ka+0}
\Biggl|\;\int\limits_{D(r)\setminus \RR^+} \Im \frac{1}{z^{p}}\dd \nu (z)\Biggr|=O(1), \quad r\to +\infty,
\end{equation}
 то  $\type_{p}^{\infty}\bigl[\nu_{\RR^+}^{\bal[q]}\bigr]<+\infty$.   
\end{enumerate}
\end{thAa}

\subsubsection{Выметание конечного рода  $\delta$-субгармонической функции на $\RR^+$}\label{b1ray1}

Функции, тождественно равные $-\infty$ и $+\infty$ на $\CC$, обозначаем соответственно через $\boldsymbol{-\infty}\in \dsbh(\CC)$ и $\boldsymbol{+\infty}\in \dsbh(\CC)$; $\dsbh_*(\CC):=\dsbh(\CC)\setminus \{\boldsymbol{\pm \infty}\}$; $\sbh_*(\CC):=\sbh(\CC)\setminus \{\boldsymbol{-\infty}\}$ \cite{Ar_d}--\cite[3.1]{KhaRoz18}.

Пусть $p\in \RR^+$  и  $v\in \dsbh_*(\CC)$. Согласно 
общему определению \ref{df:Bghang} функцию $v^{{\Bal}}\in \dsbh_*(\CC)$ называем {\it выметанием функции\/ $v$ из\/ $\CC \setminus \RR^+$ на $\RR^+$,\/} если 
$v^{{\Bal}}=v$ на $\RR^+$ вне полярного множества  и  сужение $v^{{\Bal}}\bigm|_{\CC\setminus \RR^+}$ --- гармоническая функция на $\CC\setminus \RR^+$. 
Из теоремы \ref{thdsb_a} в подразделе \ref{A2} выводится 

\begin{thAb} Пусть функция $v\in \dsbh_*(\CC)$ c зарядом Рисса $\nu_v$ представима в виде  разности двух субгармонических функций конечного типа при порядке $p\in \RR_*^+$, гармонических в некотором круге $D(r_0)$, $r_0>0$, число $q=[2p]$ --- целая часть числа $2p$. Тогда существует  выметание $v^{{\Bal}}$ на $\RR^+$ с зарядом Рисса $(\nu_v)_{\RR^+}^{\bal[q]}$,  представимое в виде разности  $v^{{\Bal}}=v_+-v_-$ двух   функций  $v_{\pm}\in \sbh_*(\CC)$, гармонических в $\CC\setminus \RR^+$, для которых 
\begin{enumerate}[{\rm (i)}]

\item\label{vvii++R} при  произвольном $p$ имеем
$v_\pm (z)\leq O\bigl(|z|^p\log^2 |z|\bigr)$ при  $z\to \infty$. 

\item\label{vvi+R} при одновременно  нецелых $p$ и $2p$ функции $v_{\pm}$ конечного типа при порядке $p$, т.\,е. $\type_p^{\infty}[v_{\pm}]<+\infty$ в смысле \cite[(2.1t), замечание 2.1]{KhI};

\item\label{vvii+R} если $p\in \NN$, то 
\begin{equation}\label{llogv++}
v_\pm (z)\leq O\bigl(|z|^p\log |z|\bigr) \quad\text{при  $z\to \infty$.}
\end{equation}

\item\label{vvii+++R}  если $p\notin \NN$, но $2p\in \NN$ и одновременно в угле $\angle (0,2\pi)$ выполнено условие\/ Бляшке рода $p$ из \eqref{ka+0},  что в данном случае эквивалентно условию Ахиезера, или, кратко, --- условию\/ {\rm А}, рода $p$ относительно угла $\angle (0,2\pi)$ {\rm (определение \ref{df:clA}):}
\begin{equation}\label{fK:abp+0}
J_{0,2\pi}^{[p]}(r_0,r;v)
\overset{\eqref{fK:abp+}}{:=}2\int_{r_0}^r\frac{v(t)}{t^{p+1}} \dd t=O(1) \quad \text{при $r\to +\infty$},
\end{equation}
то также выполнено соотношение \eqref{llogv++}.
\end{enumerate}
\end{thAb}

\subsubsection{Оценки интеграла от субгармонической функции по интервалам}\label{inti}
В теории роста целых и субгармонических функций на $\CC$ часто могут быть полезными  результаты вспомогательного подраздела \ref{Intr}, о которых упоминалось в конце подраздела \ref{111}.

Для локально ограниченной сверху  функции $v\colon \CC \to \{-\infty \}\cup \RR$
 положим 
\begin{equation}\label{Mvr}
M_v(r):=\sup_{|z|=r}v(z), \quad  M_v^+(r):=\max\bigl\{ 0,M_v(r)\bigr\}, \quad   r\in \RR^+. 
\end{equation}

\begin{thAc} Пусть $v\in \sbh_*(\CC)$ --- функция  c мерой Рисса $\nu_v$, функция $g\colon \RR_*^+\to \RR^+$ монотонная  и 
\begin{equation}\label{r0ab}
r_0\in \RR_*^+, \quad 0<b \leq \frac{1}{4}\,. 
\end{equation}
Если функция $g$ возрастающая, то для некоторой постоянной $C\in \RR^+$ справедливы неравенства
\begin{equation}\label{ginc}
\int\limits_{r_0}^R|v(t)| g(t) \dd t \leq
C\,M_v^+\bigl((1+2b)R\bigr)\,R\,g\bigl((1+4b)R\bigr)
\quad\text{при всех $R\geq r_0$}.
\end{equation}
Если функция $g$ убывающая и строго положительная, то для некоторой постоянной $C'\in \RR^+$ имеют место неравенства
\begin{equation}\label{ubg}
\int\limits_{r_0}^R|v(t)| g(t) \dd t \leq C'M_v^+\bigl((1+2b)R\bigr) \int\limits_{(1-b)r_0}^R g(r) \dd r\text{ при всех $R\geq 2r_0$}. 
\end{equation}
\end{thAc}
Теорема A3 доказана в подразделе \ref{prii} как следствие теоремы \ref{thr}.

\begin{remark} Для функций $v\in \sbh_*(\CC)$ {\it конечного уточненного порядка\/}  часть теоремы A4 
для случая {\it возрастающей\/} функции $g$ может быть получена и из результата А.\,Ф.~Гришина и Т.\,И.~Малютиной \cite[теорема 8]{GrM}.
\end{remark}

\subsubsection{Критерий вполне регулярного роста целой функции на одном луче}\label{crcrg}

Пусть последовательность точек ${\sf Z}=\{{\sf z}_k\}_{k=1,2,\dots}$ из $\CC$ не имеет точек сгущения в $\CC$.
Последовательности точек $\sf Z$ сопоставляем считающую меру $n_{\sf Z}$, равную на каждом подмножестве $S\subset \CC$ числу точек из $\sf Z$, содержащихся в $S$. 
Для интервала $I\subset \RR$ через $\omega\bigl( {\sf z}, I \bigr)\geq 0$ обозначаем делённый на $\pi$ угол, под которым виден интервал $I$ из точки ${\sf z}\in \CC$. Для отрезка $[0,x]\subset \RR^+$ полагаем
\begin{equation}\label{se:HC+0ef}
\Omega_{\CC\setminus \RR^+}^{[q]}\bigl(z,[0,x]\bigr)
\overset{\eqref{se:HC+0}}{:=}
\omega\bigl(\sqrt z,[-\sqrt{x}, \sqrt{x}]\,\bigr)
+\frac{1}{\pi} 
 \int\limits_{-\sqrt{x}}^{\sqrt{x}}\Im \frac{t^q-(\sqrt z)^q}{(\sqrt z)^{q}(t-\sqrt z)} \dd t .
\end{equation}
Заряд $\Bigl(n_{\sf Z}^{\bal [q]}\Bigr)^{\RR^+}$ на $\CC$ с носителем $\supp \Bigl(n_{\sf Z}^{\bal [q]}\Bigr)^{\RR^+}\subset \RR^+$ и определяющей его  функцией распределения 
\begin{equation}\label{nuRA0} 
{\Bigl(n_{\sf Z}^{\bal[q]}\Bigr)}^{\RR^+}(x)\overset{\eqref{df:bal++}}{:=}
\sum\limits_{{\sf z}_k\in \CC\setminus \RR^+}\Omega_{\CC\setminus \RR^+}^{[q]}\bigl( {\sf z}_k,[0,x]\bigr)+n_{\sf Z}\bigl([0,x]\bigr), \quad x\in \RR^+,
\end{equation} 
называем {\it выметанием рода $q$\/} меры $n_{\sf Z}$ или {\it последовательности $\sf Z$ на\/} $\RR^+$.
Функция \eqref{nuRA0} локально ограниченной вариации на $\RR^+$, если $q=[2p]$ и $\sf Z$ --- последовательность  конечной верхней плотности при порядке $p$, т.\,е.
$\type_p^{\infty}[n_{\sf Z}]<+\infty$.
Последовательность нулей целой функции $f$, перенумерованную с учетом кратности, обозначаем через $\Zero_f$. 

\begin{thAd}
Целая функция  $f\neq 0$ с  ${\sf Z}=\Zero_f$, $0\notin \sf Z$, конечного типа при порядке $p\in \RR_*^+$ имеет вполне регулярный рост при том же порядке $p$ на луче $\RR^+$, если и только если в определениях и обозначениях \eqref{se:HC+0ef}--\eqref{nuRA0}  существует предел 
\begin{equation}\label{cvpreg}
\lim_{\substack{x\to +\infty\\ x\notin E}}
x^{[p]+1-p}\fint\limits_0^{+\infty} \frac{1}{x-t}
\, {\Bigl(n_{\sf Z}^{\bal [{\tiny[2p]}]}\Bigr)}^{\RR^+} (t) \,\frac{\dd t}{ \, t^{[p]+1}}, 
\end{equation}
где $[2p]$ --- целая часть числа $2p\in \RR_*^+$, перечеркнутый интеграл $\fint$ определяет главное значению в смысле Коши в точке $x\in \RR^+$, а исключительное множество $E\subset \RR^+$ нулевой относительной линейной меры на $\RR^+$, т.\,е. 
\begin{equation}\label{cvpreg+}
 \lim_{r\to +\infty} \frac{1}{r}\int\limits_{E\cap [0,r]}\dd x=0.
\end{equation}
\end{thAd}
Интеграл в правой части \eqref{cvpreg} --- преобразование Гильберта с точностью до нормирующего множителя  части подынтегрального выражения, не содержащей ядро $1/(x-t)$ и продолженной на $(-\infty,0)$ нулем.     

\section{Гармонический заряд конечного рода}\label{bal_S}
\setcounter{equation}{0}
\subsection{ Гармонический заряд и ядро Пуассона рода $q$}
Напомним, что $\CC^{\up}$ --- открытая верхняя полуплоскость в комплексной плоскости $\CC$.
Через $\mathcal B (\CC)$ обозначаем множество всех борелевских подмножеств в $\CC$, а через $\mathcal B_{\rm b}(\CC)\subset \mathcal B (\CC)$ --- подкласс ограниченных в $\CC$ борелевских подмножеств.

\begin{definition}\label{df:O}
{\it Гармонический заряд рода\/ $q\in \NN_0:=\NN\cup\{0\}$ для\/ $\CC^{\up}$}  ---  это функция
$\Omega_{\CC^{\up}}^{[q]}\colon \CC^{\up}\times \mathcal B_{\rm b} (\CC) \to \RR$,
определенная по правилу 
\begin{subequations}\label{se:HC}
\begin{align} 
\Omega_{\CC^{\up}}^{[q]}(z,B)&
:=\omega_{\CC^{\up}}(z,B)+\frac{1}{\pi}  \int\limits_{B\cap \RR_{\pm\infty}}\Im \frac{t^q-z^q}{z^{q}(t-z)} \dd t 
\tag{\ref{se:HC}a}\label{seHC:b}\\
&=\omega(z,B)+\frac{1}{\pi} 
 \sum_{k=1}^{q} \Im \frac{1}{z^{k}} \,\int_{B\cap \RR_{\pm\infty}} t^{k-1}\dd t,\quad z\in \CC^{\up},
\tag{\ref{se:HC}b}\label{seHC:d}
\end{align}
\end{subequations}
Иногда удобно будет обозначать \eqref{seHC:b} как $\Omega^{[q]}(z,B; \CC^{\up})$, а элемент меры Лебега $\dd t$ на $\RR$ как $\dd \lambda_{\RR}(t)$ или  $\dd \lambda_{\RR}^{[1]}(t)$ (см. и ср. с  \eqref{se:qnub} ниже).
\end{definition}
В сумме из \eqref{seHC:d}, как обычно, при $\ZZ\ni q_1>q_2\in \ZZ$ по определению $\sum_{k=q_1}^{q_2}\dots =0$.
Указание на нижний индекс ${\CC^{\up}}$ в левой части 
\eqref{seHC:b}, как правило, опускаем. Таким образом, классическая гармоническая мера  $\omega$ для верхней полуплоскости   --- это  гармонический заряд  $\Omega^{[0]}$ рода $0$ из \cite[{\bf 3.1}]{KhI}. В отличие от гармонической меры гармонический заряд рода $q\geq 1$ по существу не является положительной мерой. 

Аналогично \cite[(3.5)]{KhI} определим {\it ядро Пуассона рода\/ $q\in \NN_0$ для $\CC^{\up}$} 
	\begin{subequations}\label{df:kP+}
	\begin{align} 
		\Poi^{[q]} (t,z)&:=\frac{1}{\pi} \,\Im \left(\frac{1}{t-z}+\frac{t^q-z^q}{z^{q}(t-z)}\right)
		=\frac{1}{\pi} \, \Im\frac{t^{q}}{ z^{q}(t-z)}
	\tag{\ref{df:kP+}a}\label{df:kPq}
\\ 
=&	\frac{1}{\pi}\, \Im\frac{1}{t-z}+\frac{1}{\pi}\sum_{k=1}^{q}\Im \frac{ t^{k-1}}{z^{k}} \geq 
\frac1{\pi}\sum_{k=1}^{q}\Im \frac{ t^{k-1}}{z^{k}} 
		, \; t\in \RR,\;  z\in 	{\CC}^{\up}.
\tag{\ref{df:kP+}b}\label{{df:kPq}b}
\end{align}
\end{subequations}
Доопределяя  нулем $\Poi^{[q]} (t,z)$ для всех  $z\in \RR\setminus \{0,t\}$, согласно  \cite[(3.6a)]{KhI} и равенству $\omega(x,B)=\delta_x(B)$ для $z=x\in\RR$, при $-\infty<t_1<t_2<+\infty$ имеем 
	\begin{subequations}\label{Omt1t2}
	\begin{align} 
		\Omega^{[q]}\bigl(z, [t_1,t_2]\bigr)&:=\int\limits_{t_1}^{t_2} \Poi^{[q]} (t,z) \dd t\quad\text{при \;$z\in {\CC}^{\up}$}, 
	\tag{\ref{Omt1t2}a}\label{{Omt1t2}a}
\\
\Omega^{[q]}\bigl(z, [t_1,t_2]\bigr)&:=\delta_z\bigl([t_1,t_2]\bigr) \quad\text{при \;$ z\in \RR$}.
\tag{\ref{Omt1t2}b}\label{{Omt1t2}b}
\end{align}
\end{subequations}
\begin{propos}\label{pr:Omes} Пусть $z\in \CC^{\overline \up}\setminus \{0,t\}$, где $\CC^{\overline \up}:=\CC^{\up}\cup \RR$ --- замкнутая верхняя полуплоскость,  $a\in (0,1)$. Тогда 
\begin{subequations}\label{seP:e}
\begin{align} 
 \bigl|\Poi^{[q]} (t,z)\bigr|&\leq \frac{|t|^q}{\pi (1-a)|z|^{q+1}} \quad\text{при  $|t|\leq a|z|$},
\tag{\ref{seP:e}a}\label{{seP:e}c}
\\ 
 \bigl|\Poi^{[q]} (t,z)\bigr|&\leq \frac{|t|^{q-1}}{\pi (1-a)|z|^{q}}
\quad\text{при $a|t|\geq |z|$}.
\tag{\ref{seP:e}b}\label{{seP:e}d}
\end{align}
\end{subequations}
\end{propos}
\begin{proof}  
Из \eqref{df:kPq} при $|t|\leq a|z|$ получаем
\begin{equation}
	\bigl|\Poi^{[q]} (t,z)\bigr|\overset{\eqref{df:kPq}}{\leq}\frac{|t|^q}{\pi |z|^q\bigl(|z|-|t|\bigr)}
	\leq \frac{|t|^q}{\pi |z|^q\bigl(|z|-a|z|\bigr)} \, ,
\end{equation}
что дает \eqref{{seP:e}c}. При $a|t|\geq |z|$ неравенства 
\begin{equation*}
	\bigl|\Poi^{[q]} (t,z)\bigr|\overset{\eqref{df:kPq}}{\leq}\frac{|t|^q}{\pi |z|^q\bigl(|t|-|z|\bigr)}
	\leq \frac{|t|^q}{\pi |z|^q\bigl(|t|-a|t|\bigr)} \,.
\end{equation*}
дают \eqref{{seP:e}d}. 
\end{proof}
\begin{propos}\label{pr:OmesO} Пусть  $-\infty<t_1<t_2<+\infty$, $q\in \NN_0$. Тогда 
для любых $z\in \CC^{\overline \up}\setminus \{0\}$ при $T:=\max\{|t_1|, |t_2|\}$ имеют место неравенства
\begin{subequations}\label{se:Omlu}
\begin{align} 
-\frac1{\pi}(t_2-t_1)\sum_{k=1}^{q}T^{k-1}\Bigl|\Im \frac{1}{z^{k}}\Bigr|&\leq \Omega^{[q]} \bigl(z, [t_1,t_2]\bigr)
\tag{\ref{se:Omlu}a}\label{{se:Omlu}a}
\\ 
 	\leq \omega \bigl(z, [t_1,t_2]\bigr)&+\frac1{\pi}(t_2-t_1)\sum_{k=1}^{q}T^{k-1}\Bigl|\Im \frac{1}{z^{k}}\Bigr|,
\tag{\ref{se:Omlu}b}\label{{se:Omlu}b}
\end{align}
\end{subequations}
а  при $t_2=-t_1=:t\in \RR^+$ справедливы неравенства
\begin{subequations}\label{se:Om+}
\begin{align} 
-\frac2{\pi}\sum_{1\leq m\leq\frac{q+1}{2}}t^{2m-1}\Bigl|\Im \frac{1}{z^{2m-1}}\Bigr|&\leq \Omega^{[q]} \bigl(z, [-t,t]\bigr)
\tag{\ref{se:Om+}a}\label{{se:Om+}a}
\\ 
 \leq \omega \bigl(z, [-t,t]\bigr)+\frac2{\pi}&\sum_{1\leq m\leq\frac{q+1}{2}}t^{2m-1}\Bigl|\Im \frac{1}{z^{2m-1}}\Bigr|.
\tag{\ref{se:Om+}b}\label{{se:Om+}b}
\end{align}
\end{subequations}
\end{propos}
\begin{proof} Из представления \eqref{seHC:d} в силу положительности гармонической меры имеем
\begin{equation}\label{allOm}
0\leq \Omega^{[q]} \bigl(z, [t_1,t_2]\bigr)-\frac1{\pi}\sum_{k=1}^{q}(t_2^k-t_1^k)\Im \frac{1}{kz^{k}}\leq \omega \bigl(z, [t_1,t_2]\bigr)\leq 1, \quad z\in \CC^{\up},
\end{equation} 
откуда
\begin{subequations}
\begin{align*} 
-\frac1{\pi}(t_2-t_1)\sum_{k=1}^{q}\Bigl|\sum_{l=0}^{k-1}t_2^{k-1-l}t_1^l\Bigl|\,\Bigl|\Im \frac{1}{kz^{k}}\Bigr|&\leq \Omega^{[q]} \bigl(z, [t_1,t_2]\bigr)
\\ 
 	\leq \omega \bigl(z, [t_1,t_2]\bigr)&+\frac1{\pi}\sum_{k=1}^{q}\Bigl|\sum_{l=0}^{k-1}t_2^{k-1-l}t_1^l\Bigl|\,\Bigl|\Im \frac{1}{kz^{k}}\Bigr|,
\end{align*}
\end{subequations}
что влечет за собой \eqref{se:Omlu}. Аналогично из \eqref{allOm} имеем
\begin{equation*}
0\leq \Omega^{[q]} \bigl(z, [-t,t]\bigr)-\frac2{\pi}\sum_{1\leq m\leq\frac{q+1}{2}}\Im \frac{t^{2m-1}}{(2m-1)z^{2m-1}}\leq \omega \bigl(z, [-t,t]\bigr)\leq 1,
\end{equation*}
что дает \eqref{se:Om+}.
\end{proof}

\subsection{Оценки гармонического заряда рода $q$}
\begin{propos}\label{pr:OmesOa}
Пусть $-\infty<t_1<t_2<+\infty$, $q\in \NN_0$,  $a\in (0,1)$, $z\in {\CC}^{\overline \up}$. Если 
\begin{equation}\label{TT}
T:=\max \{|t_1|,|t_2|\}\leq a|z|
\end{equation}
то 
\begin{equation}\label{est:O1}
	\bigl|\Omega^{[q]} \bigl(z, [t_1,t_2]\bigr)\bigr|\leq 
	(t_2-t_1)\, \frac{2T^q}{\pi (1-a)|z|^{q+1}}\,.
	\end{equation}
Если выполнено условие
\begin{equation}\label{tt}
	\min\limits_{t\in [t_1, t_2]} |t|\geq |z|/a,
\end{equation}
 то при $q>0$ имеет место неравенство
\begin{equation}\label{est:O2}
	\bigl|\Omega^{[q]} \bigl(z, [t_1,t_2]\bigr)\bigr|\leq 
	(t_2-t_1) \, \frac{T^{q-1}}{\pi (1-a)|z|^{q}}\, .
\end{equation}
Кроме того,  если выполнено одно из условий \eqref{TT} или \eqref{tt},  а также если  одновременно выполнены условия $t_1\geq 0$ и \begin{equation}\label{minMaa} -1< \cos \arg z\leq
\frac{2a\sqrt{t_1t_2}}{t_1+t_2} \,, \end{equation} то  
\begin{equation}\label{Imq}
	\Omega^{[q]} \bigl(z, [t_1,t_2]\bigr)\leq (t_2-t_1)\,\frac{1}{\pi}
	\left(\frac{2}{(1-a)^2}\Im \frac{1}{\bar z}	+\sum_{k=2}^q\,T^{k-1}\Bigl|\Im \frac{1}{z^k}\Bigr|\right)
\end{equation}
\end{propos}
\begin{proof}
Согласно   \eqref{Omt1t2} из \eqref{{seP:e}c}  следует ($\sgn=$сигнум)
\begin{multline*}
	\bigl|\Omega^{[q]} \bigl(z, [t_1,t_2]\bigr)\bigr|\leq \frac{1}{\pi (1-a)|z|^{q+1}}\int_{t_1}^{t_2}|t|^{q}\dd t
=\frac{\bigl||t_2|^{q+1}-\sgn(t_1t_2) |t_1|^{q+1}\bigr|}{\pi (1-a)(q+1)|z|^{q+1}}
\\
\leq \frac{\bigl||t_2|-\sgn(t_1t_2) |t_1|\bigr|}{\pi (1-a)(q+1)|z|^{q+1}}
\max\left\{\sum_{k=0}^q|t_2|^{q-k}|t_1|^{k}, |t_1|^{q}+|t_2|^q\right\}
\\
\leq \frac{\bigl||t_2|-\sgn(t_1t_2) |t_1|\bigr|\max\{q+1,2\}T^q}{\pi (1-a)(q+1)|z|^{q+1}}
\leq \frac{2(t_2-t_1)T^q}{\pi (1-a)|z|^{q+1}}\,,
\end{multline*}
что и дает \eqref{est:O1}. Из условия  $\min\limits_{t\in [t_1, t_2]} |t|\geq |z|/a$, в частности, следует, что пара точек
 $t_1, t_2$ расположена на $\RR$ по одну сторону от нуля. С учетом этого и согласно \eqref{Omt1t2} из \eqref{{seP:e}d} при $q>0$ получаем
\begin{multline*}
	\bigl|\Omega^{[q]} \bigl(z, [t_1,t_2]\bigr)\bigr|\leq \frac{1}{\pi (1-a)|z|^{q}}\int_{t_1}^{t_2}|t|^{q-1}\dd t
=\frac{\bigl||t_2|^{q}-|t_1|^{q}\bigr|}{\pi (1-a)q|z|^{q}}
\\
\leq \frac{\bigl||t_2|-|t_1|\bigr|}{\pi (1-a)q|z|^{q}}\sum_{k=0}^{q-1}|t_2|^{q-1-k}|t_1|^{k}\leq
\bigl||t_2|-|t_1|\bigr|\frac{qT^{q-1}}{\pi (1-a)q|z|^{q}}\,,
\end{multline*}
что и дает \eqref{est:O2}. 
Для вывода последней  оценки сверху  \eqref{Imq} достаточно воспользоваться верхней оценкой  \eqref{{se:Omlu}b}
и оценками гармонической меры  из \cite[предложение 3.3]{KhI}, в которых условия \cite[(3.20)]{KhI} и \cite[(3.22)]{KhI}, дающие оценки \cite[(3.21)]{KhI} и \cite[(3.23)]{KhI}, совпадают с \eqref{tt} и \eqref{TT} соответственно. При условии \eqref{minMaa} вместе с $t_1\geq 0$ в соответствии с оценками  \cite[предложение 3.5, (3.30)--(3.31)]{KhI} получаем \eqref{Imq}. При этом следует учесть очевидное равенство $|\Im (1/z)|=\Im (1/\bar z)$ при $0\neq z\in \CC^{\overline \up}$.
\end{proof}

\section{Выметание конечного рода  заряда}\label{ss:bmu}
\setcounter{equation}{0}

\subsection{Выметание рода $q$ из верхней полуплоскости и его оценки}\label{sss:mu} 

\begin{definition}\label{df:Bal}  Пусть $q\in \NN_0$, $\nu \in \mathcal M(\CC)$. Заряд $\nu^{\bal [q]}\in \mathcal M(\CC)$, определяемый  на каждом  $B\in \mathcal B_{\rm b} (\CC)$ равенством 
  \begin{equation}\label{df:bal}
    \nu_{\CC_{\overline{\lw}}}^{\bal [q]}(B):=:\nu^{\bal [q]}(B):=\int_{\CC^{\up}} \Omega^{[q]}(z, B\cap \CC^{\up})\dd \nu(z)+\nu\bigl(B\cap {{\CC}_{\overline \lw}}\bigr),
  \end{equation} 
  называем {\it выметанием рода\/ $q$ заряда\/ $\nu$}
  из верхней полуплоскости $\CC^{\up}$ на замкнутую нижнюю полуплоскость $\CC_{\overline{\lw}}:=\CC \setminus \CC^{\up}$; $\CC_{\lw}:=-\CC^{\up}$ 
 \end{definition}
При $q=0$, очевидно,  $\nu^{\bal}=\nu^{\bal [0]}$ \cite[определение 4.2]{KhI},
а выметание рода $q$ не меняет сужение $\nu\bigm|_{{\CC}_{\lw}}$ на нжнюю полуплоскость $\CC_{\lw}$, а его носитель $\supp \nu^{\bal [q]}$ содержится в $\CC_{\overline{\lw}}$. 
Если  выметания $(\nu^{\pm})^{\bal [q]}$ рода $q$ существуют для верхней и нижней вариаций $\nu^{\pm}$, то, очевидно, 
\begin{equation}\label{vnvbal}
\nu^{\bal [q]}=(\nu^+)^{\bal [q]}-(\nu^-)^{\bal [q]}.
\end{equation}

Для заряда  $\nu\in \mathcal M(\CC)$ функцию распределения сужения $\nu\bigm|_{\RR}$ на $\RR$ обозначаем, как и в \cite[(1.9)]{KhI} (см. также 
\eqref{nuRA0}
), через 
\begin{equation}\label{nuR} 
\nu^{\RR}(x):=\begin{cases} -\nu\bigl( [x,0) \bigr) \; &\text{ при } x<0 ,\\ \nu\bigl([0,x] \bigr) \; &\text{ при } x\geq 0, \end{cases} 
\quad |\nu|^{\RR}=(\nu^+)^{\RR}+(\nu^-)^{\RR}.
\end{equation}

\begin{propos}\label{pr:b}  Пусть $\nu\in \mathcal M(\CC)$ --- заряд с компактным носителем, $q\in \NN_0$ и для некоторого числа $r_0>0$ сходятся (конечны) интегралы
\begin{equation}\label{i0r0}
\int_{D(r_0)\cap \CC^{\up}} \Im \frac{1}{z^k}\dd \nu (z),\quad\text{при  $1\leq k\leq q$}.
\end{equation}
Тогда в обозначении \eqref{nuR} для функции распределения  $\bigl(\nu^{\bal [q]}\bigr)^{\RR}$ 	выметания $\nu^{\bal [q]}$  рода $  q$ на $\RR$   заряда $\nu$   по всем $t\in \RR$ имеет место тождество
\begin{equation}\label{dqt}
\bigl(\nu^{\bal [q]}\bigr)^{\RR}(t)\equiv (\nu^{\bal})^{\RR}(t)
+\frac{1}{\pi}\sum_{k=1}^q\left(\int_{\CC^{\up}} \Im \frac{1}{z^k}\dd \nu (z)\right)\, \frac{1}{k}\,t^{k}\,,
\end{equation}
 для верхней и полной вариаций $\bigl(\nu^{\bal [q]}\bigr)^+$ и  $\bigl|\nu^{\bal [q]}\bigr|$ имеем неравенства
\begin{subequations}\label{se:qnu}
\begin{align} 
\bigl(\nu^{\bal [q]}\bigr)^+ &\leq |\nu^{\bal [q]}|\leq |\nu^{\bal}|
+\frac{1}{\pi}\sum_{k=1}^q\left|\int_{\CC^{\up}} \Im \frac{1}{z^k}\dd \nu (z)\right| \lambda_{\RR}^{[k]},
\tag{\ref{se:qnu}a}\label{se:qnua}
\\
\intertext{где мера $\lambda_{\RR}^{[k]}$ определена через ее плотность равенством}
d\lambda_{\RR}^{[k]}(t)&:=t^{k-1}\dd \lambda_{\RR}(t)=t^{k-1}\dd t ,\quad t\in \RR,
\tag{\ref{se:qnu}b}\label{se:qnub}
\\
\intertext{а для нижней вариации в случае положительной меры $\nu$ имеем}
\bigl(\nu^{\bal [q]}\bigr)^-&\leq \frac{1}{\pi}\sum_{k=1}^q\left|\int_{\CC^{\up}} \Im \frac{1}{z^k}\dd \nu (z)\right| \lambda_{\RR}^{[k]} \quad \text{при  $\nu\in \mathcal M^+(\CC)$.}
\tag{\ref{se:qnu}c}\label{se:qnuc}
\end{align}
\end{subequations}
\end{propos}
Комментарий к  сходимости интегралов \eqref{i0r0} в терминологии \cite[2.2]{KhI}
--- 
\begin{lemma}\label{l00}  Если заряд $\nu$ принадлежит классу сходимости при порядке исчезания $q$ около нуля, то интегралы 
\eqref{i0r0} сходятся.
\end{lemma}
\begin{proof}[Доказательство леммы]
По условию леммы и по  \cite[предложение 2.2(ii)]{KhI} с  функцией $f:=|\nu|^{\rad}$ сходится интеграл вида \cite[(2.11)]{KhI}:
\begin{equation}\label{0cv0}
	\int_{0}^{r_0} \frac{\dd |\nu|^{\rad}(t)}{t^{q}}=
	\int_{D(r_0)} \frac{1}{|z|^k}\dd |\nu| (z) \geq \int_{D(r_0)} \Bigl|\Im \frac{1}{z^k}\Bigr|\dd |\nu| (z),
\end{equation}
откуда следует сходимость интегралов \eqref{i0r0} при всех $1\leq k\leq q$.  
\end{proof}
\begin{proof}[Доказательство предложения \ref{pr:b}] 
Для доказательства \eqref{dqt} рассмотрим заряды на $\RR$ как вещественные меры Радона. Пусть 
 $f\colon \RR\to \RR$ --- произвольная финитная непрерывная функция. 
Из определения \ref{df:Bal}  и явного  вида 
 гармонического заряда 
рода $q$ в определении \ref{df:O} в виде \eqref{seHC:d} 
\begin{multline*} 
\int_{\RR} f(t)\dd\, \bigl(\nu^{\bal [q]}\bigr)^{\RR}(t)= \int_{\RR} f(t)\dd \,(\nu^{\bal})^{\RR}(t) \\
+\frac{1}{\pi}\sum_{k=1}^q\int_{\RR}f(t) \left(\, \int_{\CC^{\up}} \Im \frac{t^{k-1}}{z^k}\dd \nu (z)\right)\dd \,\lambda_{\RR}(t)
\\
=\int_{\RR} f(t)\dd\, (\nu^{\bal})^{\RR}(t) +\frac{1}{\pi}\int_{\RR}f(t) \left(\int_{\CC^{\up}} 
\sum_{k=1}^q \Im \frac{1}{z^k}\dd \nu (z)\right)\dd \frac{t^k}{k}
,\quad t\in \RR, 
\end{multline*}
что и означает равенство \eqref{dqt}. По определениям верхней и полной вариаций заряда тождество  \eqref{dqt} дает \eqref{se:qnu}.  Оценка \eqref{se:qnuc} также следует из \eqref{dqt}, где для положительной меры $\nu$
мера $\nu^{\bal}$  положительна и в формировании нижней вариации $\bigl(\nu^{\bal [q]}\bigr)^-$ 
не участвует.
\end{proof}

\begin{propos}\label{pr:infty}
Пусть $q\in \NN_0$,  $a\in (0,1)\subset \RR^+$ и 
\begin{subequations}\label{T}
\begin{align}
-\infty<t_1<t_2<+\infty,& \quad  T:=\max\bigl\{|t_1|,|t_2|\bigr\},
\tag{\ref{T}t}\label{tT}\\
\nu\in \mathcal M(\CC),&\quad	\supp \nu \subset \CC^{\overline{\up}}\setminus D(T/a). 
\tag{\ref{T}n}\label{suppnua}
\end{align}
\end{subequations}
Если сужение $\nu\bigm|_{\CC^{\up}}$ заряда $\nu$ на верхнюю полуплоскость   принадлежит классу сходимости при порядке роста $q+1$ (около бесконечности), то 
равенством \eqref{df:bal} корректно определено выметание рода $q$ заряда $\nu$ из определения\/ {\rm \ref{df:Bal}} 
и 	для функции распределения $\bigl|\nu^{\bal [q]}\bigr|^{\RR}$, определенной в \eqref{nuR}, полной вариации $\bigl|\nu^{\bal [q]}\bigr|$ имеет место оценка
\begin{equation}\label{est_nuBr}
\bigl|\nu^{\bal [q]}\bigr|^{\RR}(t_2)-\bigl|\nu^{\bal [q]}\bigr|^{\RR}(t_1)\leq 
\frac{2(q+1)(t_2-t_1)T^q}{\pi (1-a)} \int\limits_{T/a}^{+\infty}\frac{|\nu|^{\rad}(t)}{t^{q+2}}\dd t, 
\end{equation}
где интеграл справа сходится (конечен).
\end{propos}
\begin{proof} По определению \ref{df:Bal}  в обозначениях   $\bigl|\nu^{\bal [q]}\bigr|$  и $|\nu|$ для полных  вариаций зарядов 
 соответственно $\nu^{\bal [q]}$  и $\nu$ имеем оценки
\begin{equation*}
\hspace{-3mm}	\bigl|\nu^{\bal [q]}\bigr|^{\RR}(t_2)-\bigl|\nu^{\bal [q]}\bigr|^{\RR}(t_1)
=  \bigl|\nu^{\bal [q]}\bigr|\bigl([t_1,t_2]\bigr)\overset{\eqref{df:bal}}{\leq} 
	\int\limits_{T/a\leq |z|} 	\bigl|\Omega^{[q]}\bigl(z, [t_1,t_2]\bigr)\bigr|\dd |\nu|(z).
\end{equation*}
Отсюда, используя  оценку \eqref{est:O1} при условии \eqref{TT},  основанном на \eqref{suppnua}, 
\begin{multline*}
\bigl(\nu^{\bal [q]}\bigr)^{\RR}(t_2)-\bigl(\nu^{\bal [q]}\bigr)^{\RR}(t_1)\leq 
\frac{2(t_2-t_1) T^{q}}{\pi (1-a)} \int\limits_{T/a\leq |z|}\frac{1}{|z|^{q+1}}\dd |\nu|(z)
\\
=\frac{2(t_2-t_1) T^{q}}{\pi (1-a)} \int_{T/a}^{+\infty} 	\frac{\dd |\nu|^{\rad}(t)}{t^{q+1}}\,.
\end{multline*}
На основе \cite[предложение 2.1(ii)]{KhI} из принадлежности заряда $\nu$ классу сходимости при порядке роста 
$q+1$ около бесконечности следует, что  $\type_{q+1}^{\infty}[\nu]=0$ и возможно интегрирование по частям вида \cite[(2.5)]{KhI} с $p:=q+1$ и $f:=|\nu|^{\rad}$ для последнего интеграла. Это дает  продолжение оценки правой части   
  в виде
\begin{equation*} 
\leq \frac{2(t_2-t_1)T^q}{\pi (1-a)} 
\left(-\frac{a^{q+1}}{T^{q+1}} |\nu|^{\rad}
\left(\frac{T}{a}\right)+(q+1)\int_{T/a}^{+\infty}\frac{|\nu|^{\rad}(t)}{t^{q+2}}\dd t\right) <+\infty,
\end{equation*}
откуда получаем оценку \eqref{est_nuBr}.
\end{proof}
Достаточные  условия  существования выметания конечного рода  дает

\begin{theorem}\label{th:qbalo}
Пусть $q\in \NN_0$. Если заряд $\nu\in \mathcal M(\CC)$ удовлетворяет условию \eqref{i0r0},  а сужение $\nu\bigm|_{\CC^{\up}}$ заряда $\nu$ на верхнюю полуплоскость принадлежит классу сходимости при порядке роста $q+1$ около бесконечности\/ {\rm \cite[2.1]{KhI}}, то существует выметание $\nu^{\bal [q]}\in \mathcal M(\CC)$ рода $q$. Если заряд
 $\nu \in \mathcal M (\CC)$ сосредоточен вне нуля, т.\,е.  
\begin{equation}\label{Dr0}
D(r_0)\cap \supp \nu =\varnothing \quad\text{для некоторого числа $r_0>0$},
\end{equation} 
то 
\begin{equation}\label{Dr0q}
|\nu^{\bal[q]}|^{\rad} (t)=O(t^{q+1})\quad \text{при $t\to 0$}.
\end{equation}
\end{theorem}
\begin{proof} Не умаляя общности, можно считать, что заряд $\nu$ сосредоточен в $\CC^{\overline \up}$.  Для какого-либо  числа $r_0>0$ положим $\nu_0:=\nu\bigm|_{D(r_0)}$ --- сужение заряда $\nu$ на круг $D(r_0)$, $\nu_{\infty}:=\nu-\nu_0$. По предложениям  \ref{pr:b} и \ref{pr:infty} существуют соответственно $(\nu_0)^{\bal[q]}$ и $(\nu_{\infty})^{\bal[q]}$, а значит и $\nu^{\bal[q]}=(\nu_0)^{\bal[q]}+(\nu_{\infty})^{\bal[q]}$. 
Положим $a=1/2$. Применяя дважды   \eqref{est_nuBr} с $t_1=0< t_2=t<r_0/2$ и 
$-r_0/2<t=t_1<t_2=0$ соответственно,  получаем 
\begin{equation*}
\bigl|(\nu_{\infty})^{\bal[q]}\bigr|^{\rad} (t)\leq 
 \frac{4(q+1)t^{q+1}}{\pi (1-1/2)} \int\limits_{r_0}^{+\infty}\frac{|\nu_{\infty}|^{\rad}(t)}{t^{q+2}}\dd s.
\end{equation*}
Поскольку при условии \eqref{Dr0} $\nu=\nu_{\infty}$, последняя оценка дает \eqref{Dr0q}.
\end{proof}
\subsection{Выметание конечного рода  заряда конечного порядка}\label{ss:bmuf}
Обобщение  \cite[теорема 1]{KhI} для выметания конечного рода  $q\in \NN$  ---
\begin{theorem}\label{th:cupq} Пусть $p\in \RR^+$,  $r_0\in \RR^+_*$,  заряд  $\nu\in \mathcal M(\CC)$ конечного типа при  порядке $p$ удовлетворяет условию \eqref{Dr0}, $q:=[p]$ --- целая часть числа $p$. Тогда определено выметание $\nu^{\bal[q]}\in \mathcal M(\CC)$	из $\CC^{\up}$. При этом для  функции  распределения $\bigl|\nu^{\bal[q]}\bigr|^{\RR}$ на $\RR$, определенной в \eqref{nuR}, при любых  
\begin{equation}\label{0rt0q}
 t\in \RR, \quad 0\leq r\leq |t|, \quad a\in (0,1)\subset \RR^{+}
\end{equation} 
 для $\nu^{\overline \up}:=\nu\bigm|_{\CC^{\overline \up}}$ имеет место  оценка
\begin{multline}\label{nu:estTosq}
\bigl|\nu^{\bal[q]}\bigr|^{\RR}(t+r)-\bigl|\nu^{\bal[q]}\bigr|^{\RR}(t-r)\leq
\const_{\nu,p,a}rt^{p-1}+ |\nu^{\overline{\up}}|(t,r) 
\\+r\biggl(\int_r^{at}\frac{|\nu^{\overline{\up}}|(t,s)}{s^2}\dd s\biggl)^+
+\const_prt^{q-1}\left|\int_{D(t)\setminus D(r_0)} \Im \frac{1}{z^q}\dd \nu^{\overline \up} (z)\right|,
\end{multline}
где для числа $x\in \RR$ использовано обозначение $x^+:=\max\{0,x \}\in \RR^+$.
\end{theorem}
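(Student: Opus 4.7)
The plan proceeds in three stages: establish existence of $\nu^{\bal[q]}$; decompose $\nu$ by distance from the origin and from the point $t$; and assemble the four terms of \eqref{nu:estTosq}.

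For existence, the hypothesis $\type_p^{\infty}[\nu]<+\infty$ with $p\leq q+1$ implies, via \cite[2.1]{KhI}, that $\nu$ is also of finite type of order $q+1$, and $D(r_0)\cap\supp\nu=\varnothing$ makes the integrals \eqref{i0r0} trivially convergent. Hence Theorem \ref{th:qbalo} produces $\nu^{\bal[q]}\in\mathcal{M}(\CC)$. By Definition \ref{df:Bal} and \eqref{vnvbal} the part $\nu\bigm|_{\CC_{\lw}}$ does not affect $\nu^{\bal[q]}\bigm|_{\RR}$, so we may and will reduce to $\nu=\nu^{\overline\up}$.

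Fix $t>0$ (by symmetry, the case $t<0$ being identical and $t=0$ forcing $r=0$) and $r\in[0,t]$. I would partition $|\nu|$ into four pieces: (i) $\mu_{\rm loc}:=|\nu|\bigm|_{\{|z-t|\leq r\}}$; (ii) $\mu_{\rm mid}:=|\nu|\bigm|_{\{|z-t|>r,\;a(t-r)<|z|<(t+r)/a\}}$; (iii) $\mu_{\rm ext}:=|\nu|\bigm|_{\{|z|\geq (t+r)/a\}}$; (iv) $\mu_{\rm int}:=|\nu|\bigm|_{\{|z|\leq a(t-r)\}}$. The local piece (i) contributes to $|\nu^{\bal[q]}|([t-r,t+r])$ at most its own total mass $|\nu^{\overline\up}|(t,r)$, accounting for the second term of \eqref{nu:estTosq}. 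The exterior piece (iii) is controlled by inequality \eqref{est:O1} of Proposition \ref{pr:OmesOa}, giving after integration $\const\cdot rt^{q}\int_{(t+r)/a}^{\infty}s^{-q-1}\dd |\nu|^{\rad}(s)$; the finite-type-$p$ estimate on $|\nu|^{\rad}$ via \cite[2.1]{KhI} converts this into the first term $\const\cdot rt^{p-1}$. The middle annulus (ii) is handled by \eqref{Imq}, whose leading summand $\frac{2}{\pi(1-a)^2}\Im(1/\bar z)$, after integration against $\mu_{\rm mid}$ and a layer-cake rewriting in the radial variable $s=|z-t|$, produces the third term $r\bigl(\int_r^{at}s^{-2}|\nu^{\overline\up}|(t,s)\dd s\bigr)^+$, the positive part reflecting the retention of only the upper envelope after sign-tracking.

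The interior piece (iv), handled by \eqref{est:O2}, together with the residual correction sum $\sum_{k=2}^{q}T^{k-1}|\Im z^{-k}|$ contributed by \eqref{Imq} over (ii), must be recombined into the single fourth term $\const\cdot rt^{q-1}\bigl|\int_{D(t)\setminus D(r_0)}\Im z^{-q}\dd\nu^{\overline\up}\bigr|$. The mechanism is identity \eqref{dqt} of Proposition \ref{pr:b}, which writes the difference $(\nu^{\bal[q]})^{\RR}-(\nu^{\bal})^{\RR}$ as a polynomial of degree $q$ in $t$ with coefficients proportional to the moments $\int\Im z^{-k}\dd\nu$; the support hypothesis $D(r_0)\cap\supp\nu=\varnothing$ ensures absolute convergence of these moments, and each lower-order one ($k<q$) is dominated by the top-order one multiplied by a power of $t$ that fits inside $t^{q-1}$. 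The principal obstacle I anticipate is precisely this recombination step and the bookkeeping of sign cancellations: verifying that the interior and middle contributions combine into the single $\Im z^{-q}$ term without incurring a loss of polynomial sharpness, and that the constant in the first term really depends only on $\nu,p,a$ and not on $t,r$ as the exterior and middle decompositions are glued along $|z|=(t+r)/a$.
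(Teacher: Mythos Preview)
Your overall architecture---existence via Theorem~\ref{th:qbalo}, reduction to $\nu=\nu^{\overline\up}$, and a geometric decomposition of the support---is sound, but the route diverges from the paper's and contains a genuine gap in the final recombination.

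The paper does \emph{not} attempt a four-piece split at order $q$. Instead it splits only radially, $\nu=\nu_0+\nu_\infty$ at radius $T/a$ with $T:=t+r$, and then for the inner piece $\nu_0$ invokes the \emph{identity} \eqref{dqt} of Proposition~\ref{pr:b},
\[
\bigl(\nu_0^{\bal[q]}\bigr)^{\RR}(t)=\bigl(\nu_0^{\bal}\bigr)^{\RR}(t)+\frac{1}{\pi}\sum_{k=1}^{q}\Bigl(\int_{\CC^{\up}}\Im\frac{1}{z^k}\dd\nu_0(z)\Bigr)\frac{t^k}{k},
\]
which reduces order $q$ to order $0$ plus explicit polynomial corrections whose coefficients are the \emph{signed} moments $\int\Im z^{-k}\dd\nu$. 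The order-$0$ part is then handled by citing \cite[Theorem~1]{KhI}; the moments for $k<q$ are absorbed into $\const\cdot rt^{p-1}$ (not into the fourth term, contrary to your plan), because $\int_{r_0}^{T/a}s^{-k}\dd|\nu|^{\rad}(s)=O(T^{p-k})$ and $(t{+}r)^k-(t{-}r)^k=O(rt^{k-1})$; only the $k=q$ moment survives as the distinguished fourth term.

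Your approach via the pointwise kernel bounds \eqref{est:O1}, \eqref{est:O2}, \eqref{Imq} loses exactly this sign structure. Applying \eqref{est:O2} to the interior piece produces
\[
\frac{2r\,T^{q-1}}{\pi(1-a)}\int_{r_0\leq|z|\leq a(t-r)}\frac{1}{|z|^{q}}\dd|\nu|(z),
\]
which is controlled by $\int|z|^{-q}\dd|\nu|$, not by $\bigl|\int\Im z^{-q}\dd\nu\bigr|$. These can differ drastically: for instance when $p=q$ the absolute integral is typically of order $\log t$ even when the signed one is $O(1)$, and in general there is no inequality of the form $\int|z|^{-q}\dd|\nu|\leq C\bigl|\int\Im z^{-q}\dd\nu\bigr|$. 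So your interior contribution cannot be ``recombined'' into the fourth term of \eqref{nu:estTosq} as you propose; the signed moment in \eqref{nu:estTosq} arises only through an identity such as \eqref{dqt}, not through absolute kernel estimates. A secondary issue is that \eqref{Imq} is stated under the hypothesis that one of \eqref{TT} or \eqref{tt} holds, which fails on your middle annulus; the paper sidesteps this by delegating the middle region to the already-proved $q=0$ estimate in \cite[Theorem~1]{KhI}.
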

\begin{proof} Заряд $\nu$ при  $\type_p^{\infty}[\nu]<+\infty$ принадлежит классу сходимости при порядке роста $q+1$ около $\infty$ \cite[2.1, (2.2)]{KhI}. Вместе с условием  \eqref{Dr0} это по теореме \ref{th:qbalo} обеспечивает существование выметания $\nu^{\bal[q]}$. Для определенности в \eqref{0rt0q} можем считать, что $t>0$. Достаточно рассмотреть случай $q\in \NN$. Положим 
\begin{equation}\label{df:Tnuo}
T:=t+r, \quad \nu_0:=\nu\bigm|_{D(T/a)}, \quad \nu_{\infty} :=\nu-\nu_0=\nu\bigm|_{\CC\setminus D(T/a)} .
\end{equation}
Для заряда $\nu_0$ по неравенству \eqref{se:qnua} с \eqref{se:qnub} из предложения \ref{pr:b} имеем   
\begin{multline}\label{est:nu0}
\bigl|(\nu_0)^{\bal [q]}\bigr|^{\RR}(t+r)-\bigl|(\nu_0)^{\bal [q]}\bigr|^{\RR}(t-r)
\leq \bigl|(\nu_0)^{\bal}\bigr|^{\RR}(t+r)-\bigl|(\nu_0)^{\bal}\bigr|^{\RR}(t-r)\\
+\frac{1}{\pi}\sum_{k=1}^q
\int_{t-r}^{t+r}\left|\int_{D(T/a)\setminus D(r_0)} \Im \frac{1}{z^k}\dd \nu^{\overline \up} (z)\right| s^{k-1} \dd s \\
\leq \bigl|(\nu_0)^{\bal}\bigr|^{\RR}(t+r)-\bigl|(\nu_0)^{\bal}\bigr|^{\RR}(t-r)
+ \sum_{k=1}^{q-1} \frac{1}{\pi k}\bigl((t+r)^k-(t-r)^k\bigr)\int_{r_0}^{T/a} \frac{1}{s^{k}} \dd \nu^{\rad}(s)
\\+\frac{1}{\pi}
\bigl((t+r)^q-(t-r)^q\bigr) \left|\int_{D(T/a)\setminus D(r_0)} \Im \frac{1}{z^q}\dd \nu^{\overline \up} (z)\right| \\
\leq \Bigl( \bigl|(\nu_0)^{\bal}\bigr|^{\RR}(t+r)-\bigl|(\nu_0)^{\bal}\bigr|^{\RR}(t-r) \Bigr)\\
+\const_{q,\nu,a} rt^{p-1}+\const_q rt^{q-1}\left|\int_{D(T/a)\setminus D(r_0)} \Im \frac{1}{z^q}\dd \nu^{\overline \up} (z)\right|,
\end{multline}
где при последнем переходе использовано интегрирование по частям для интегралов под знаком $\sum_{k=1}^{q-1}\cdots$ и оценка $|\nu|^{\rad}(s)\leq \const_{\nu}s^p$, справедливая ввиду \eqref{Dr0}
для всех $s\in \RR^+$. При $q=0$ эта оценка также верна без последних двух слагаемых в правой части \eqref{est:nu0}. Для оценки первой скобки из правой части \eqref{est:nu0} воспользуемся теоремой 
\cite[теорема 1]{KhI}
\begin{equation}
t_1:=t-r ,\quad t_2:=t+r,  \quad t_0:=t,  \quad T\overset{\eqref{df:Tnuo}}{:=}t+r, \quad (\nu_0)^{\overline \up}:=
\nu_0\bigm|_{\CC^{\overline \up}} 
\end{equation}
в обозначениях \cite[(4.3)]{KhI}  с учетом \cite[(4.4)]{KhI}:
\begin{multline}\label{es:mnu0}
\bigl|(\nu_0)^{\bal}\bigr|^{\RR}(t+r)-\bigl|(\nu_0)^{\bal}\bigr|^{\RR}(t-r)  
{\leq} |\nu^{\overline{\up}}|(t,r) 
+ \const_{\nu, a}rT^{p-1}\\
+\frac{r}{(1-a)^2}\int\limits_{D(T/a)\setminus D(T)}\Bigl|\Im \frac{1}{z}\Bigr|\dd |(\nu_0)^{\overline{\up}}|(z)
+ r\int_r^{aT}\frac{|\nu^{\overline{\up}}|(t,s)}{s^2}\dd s.
\end{multline}
Предпоследний  интеграл в  \eqref{es:mnu0} оцениваем интегрированием по частям:
\begin{equation}\label{es:mnu0+}
\int\limits_{D(T/a)\setminus D(T)}\Bigl|\Im \frac{1}{z}\Bigr|\dd |(\nu_0)^{\overline{\up}}|(z)\leq 
\int_{T}^{T/a} \frac{1}{s} \dd \nu^{\rad}(s)\leq \const_{\nu,a}T^{p-1}.
\end{equation}
Таким же путем оцениваем последний интеграл в \eqref{es:mnu0}, равный
\begin{multline}\label{es:mnu0++}
\biggl(\int_r^{at}+\int_{at}^{aT}\biggr)\frac{|\nu^{\overline{\up}}|(t,s)}{s^2}\dd s\leq
\int_r^{at}\frac{|\nu^{\overline{\up}}|(t,s)}{s^2}\dd s
\\+|\nu|^{\rad}(t+aT)\int_{at}^{aT}\frac{1}{s^2}\dd s\leq 
\biggl(\int_r^{at}\frac{|\nu^{\overline{\up}}|(t,s)}{s^2}\dd s\biggl)^+
+\const_{\nu,a}T^p\frac{1}{t}.
\end{multline} 
Из оценок \eqref{es:mnu0}--\eqref{es:mnu0++} ввиду неравенств $t\leq T\leq 2t$ получаем 
\begin{multline}\label{est:nu0a}
 \bigl|(\nu_0)^{\bal}\bigr|^{\RR}(t+r)-\bigl|(\nu_0)^{\bal}\bigr|^{\RR}(t-r) 
\\{\leq} |\nu^{\overline{\up}}|(t,r) 
+ \const_{\nu, a}rt^{p-1}
+ r\biggl(\int_r^{at}\frac{|\nu^{\overline{\up}}|(t,s)}{s^2}\dd s\biggl)^+.
\end{multline}
Модуль последнего интеграла в \eqref{est:nu0} при $q\in \NN$ равен 
\begin{multline}\label{est:nu0b}
\left|\biggl(\int_{D(t)\setminus D(r_0)} +\int_{D(T/a)\setminus D(t)}\biggr)\Im \frac{1}{z^q}\dd \nu^{\overline \up} (z)\right|
\leq \left|\int_{D(t)\setminus D(r_0)} \Im \frac{1}{z^q}\dd \nu^{\overline \up} (z)\right|
\\+\int_{t}^{T/a}\frac{1}{s^q}\dd \nu^{\rad} (s)
\leq \left|\int_{D(t)\setminus D(r_0)} \Im \frac{1}{z^q}\dd \nu^{\overline \up} (z)\right|+\const_{\nu,a,p}t^{p-q}.
\end{multline}
Таким образом, из  \eqref{est:nu0}, \eqref{est:nu0a} и \eqref{est:nu0b} получаем 
оценку 
\begin{multline}\label{fornu0}
\bigl|(\nu_0)^{\bal [q]}\bigr|^{\RR}(t+r)-\bigl|(\nu_0)^{\bal [q]}\bigr|^{\RR}(t-r)
\leq \const_{\nu,a,p}rt^{p-1}+ |\nu^{\overline{\up}}|(t,r) 
\\+r\biggl(\int_r^{at}\frac{|\nu^{\overline{\up}}|(t,s)}{s^2}\dd s\biggl)^+
+\const_qrt^{q-1}\left|\int_{D(t)\setminus D(r_0)} \Im \frac{1}{z^q}\dd \nu^{\overline \up} (z)\right|.
\end{multline}

Для заряда $\nu_{\infty}$ из \eqref{df:Tnuo} по неравенству \eqref{est_nuBr} предложения \eqref {pr:infty} 
\begin{multline*}
\bigl|(\nu_{\infty})^{\bal [q]}\bigr|^{\RR}(t_2)-\bigl|(\nu_{\infty})^{\bal [q]}\bigr|^{\RR}(t_1)\leq 
 \frac{2(q+1)(t_2-t_1)T^q}{\pi (1-a)} \int\limits_{T/a}^{+\infty}\frac{|\nu|^{\rad}(s)}{s^{q+2}}\dd s\\
\leq \const_{p,a}\cdot rT^q \cdot \const_{\nu} \int_{T/a}^{+\infty} \frac{s^{p}}{s^{q+2}} \dd s
\leq \const_{\nu,p,a} rT^{p-1}\leq \const_{\nu,p,a}rt^{p-1}
\end{multline*}
для любого $t\in \RR^+$. Отсюда и из \eqref{df:Tnuo} и \eqref{fornu0} получаем нужное \eqref{nu:estTosq}.
\end{proof}

 
\begin{corollary} Пусть выполнены условия теоремы\/ {\rm \ref{th:cupq}} и в дополнение 
\begin{enumerate}[{\rm (i)}]
\item\label{aei} при некотором $\e>0$ заряд $\nu^{\overline\up}$ сосредоточен вне пары замкнутых углов $\angle[0,\e]\cup \angle [\pi-\e, \pi]$,
\item\label{aeii} для заряда $\nu^{\overline\up}$ выполнено условие Бляшке
\begin{equation}\label{cBq}
\left|\int_{D(t)\setminus D(r_0)} \Im \frac{1}{z^q}\dd \nu^{\overline \up} (z)\right|=O(1)\quad\text{при $t\to +\infty$}.
\end{equation}
\end{enumerate}
Тогда найдутся постоянные $C\in \RR^+$ и $t_0\in \RR_*^+$, с которыми 
\begin{equation}\label{trnua}
\bigl|\nu^{\bal[q]}\bigr|^{\RR}(t+r)-\bigl|\nu^{\bal[q]}\bigr|^{\RR}(t-r)\leq Crt^{p-1}\; \text{при всех $|t|\geq t_0$, $r\in [0,1]$}.
\end{equation}
\begin{proof} Выберем $a=\min \{\arctg \e, 1/2\}$. Тогда по условию \eqref{aei} для некоторого  $t_0\in \RR_*^+$ при всех $|t|\geq t_0$ полукруги $\overline D\bigl(t,a|t|\bigr)\cap \CC^{\overline{\up}}$ не пересекаются с носителем $\supp \nu^{\overline{\up}}$. Следовательно, в правой части 
\eqref{nu:estTosq} второе и третье слагаемое равны нулю и
\begin{multline*}\label{nu:estTe}
\bigl|\nu^{\bal[q]}\bigr|^{\RR}(t+r)-\bigl|\nu^{\bal[q]}\bigr|^{\RR}(t-r)\leq \const_{\nu,p,a}rt^{p-1}\\
+\const_prt^{q-1}\left|\int_{D(t)\setminus D(r_0)} \Im \frac{1}{z^q}\dd \nu^{\overline \up} (z)\right|
\; \text{при всех $|t|\geq t_0$, $r\in [0,1]$}.
\end{multline*}
Отсюда и из условия Бляшке \eqref{cBq} 
следует \eqref{trnua}.
\end{proof}
\end{corollary}

\begin{theorem}[{\rm ср. с \cite[Лемма 2.1.2]{KhDD92}}]\label{th14} Пусть $\nu\in \mathcal M(\CC)$ --- заряд конечного типа при порядке $p\in \RR^+$ около $\infty$, т.\,е. $\type_p^{\infty}[\nu]{<}+\infty$, $q:=[p]$ --- целая часть $p$,  $0\notin \supp \nu$, т.\,е. для некоторого $r_0>0$ выполнено \eqref{Dr0}.
\begin{enumerate}[{\rm 1.}]
\item\label{c:0nuniii} Тогда существуют выметание $\nu^{\bal[q]}$ из $\CC^{\up}$ и  постоянная $C:=\const_{\nu,p}$,
для  которых при всех $r\in \RR^+$ справедлива оценка
\begin{equation}\label{es:pob}
|\nu^{\bal[q]}|^{\rad}(r)\leq C \left(r^p+r^q\Bigl|\int_{D(r)\setminus D(r_0)} \Im \frac{1}{z^q}\dd \nu^{{\up}} (z)\Bigr|\right).
\end{equation}

\item\label{c:0nuii} При любом $p\in \RR^+$
 выметание $\nu^{\bal[q]}$  удовлетворяет условию
\begin{equation}\label{es:plog}
|\nu^{\bal[p]}|^{\rad}(r)=O(r^p\log  r)\quad \text{при $r\to +\infty$}.
\end{equation}

\item\label{c:0nuni} Если  $p\in \RR^+\setminus \NN_0$ --- нецелое число,  то 
выметание $\nu^{\bal[q]}$ из $\CC^{\up}$ конечного типа при том же порядке $p$, т.\,е.  $\type_{p}^{\infty}\bigl[\nu^{\bal[q]}\bigr]<+\infty$.
 
\item\label{c:0nuiii} Если $p\in \NN_0$, --- при этом, конечно, $q=p$, --- и 
для $\nu$ в  верхней полуплоскости выполнено условие Бляшке  рода $p=q$ около $\infty$ вида \eqref{cBq}, 
 то
 выметание $\nu^{\bal[q]}=\nu^{\bal[p]}$ из $\CC^{\up}$ конечного типа при порядке $p$.
\end{enumerate}
\end{theorem}

\begin{proof} \ref{c:0nuniii}. Применим оценку  \eqref{nu:estTosq} теоремы \ref{th:cupq} с $0<t=r$ и $a=1/2$ в \eqref{0rt0q}:
\begin{multline*}
\bigl|\nu^{\bal[q]}\bigr|^{\RR}\bigl([0,2t]\bigr)\leq \const_{\nu,p,a}rt^{p-1}+ |\nu^{\overline{\up}}|(t,r) 
\\
+\frac{2r}{\pi}\biggl(\int_r^{at}\frac{|\nu^{\overline{\up}}|(t,s)}{s^2}\dd s\biggl)^+
+\const_prt^{q-1}\left|\int_{D(t)\setminus D(r_0)} \Im \frac{1}{z^q}\dd \nu^{\overline \up} (z)\right|
\\
\leq \const_{\nu,p} t^p+\const_pt^q\left|\int_{D(t)\setminus D(r_0)} \Im \frac{1}{z^q}\dd \nu^{\overline \up} (z)\right|.
\end{multline*}
Такую же  оценку имеем и для $\bigl|\nu^{\bal[q]}\bigr|^{\RR}\bigl([-2t,0]\bigr)$, что доказывает 
\eqref{es:pob}. 

\ref{c:0nuii}. При произвольном  $p$ соотношение \eqref{es:plog} получаем из 
\begin{equation}\label{e:pq}
|\nu^{\bal[q]}|^{\rad}(r)\leq C r^p+Cr^q\int_{r_0}^r \frac{1}{t^q}\dd \nu^{\rad} (t)
\end{equation} 
интегрированием по частям последнего интеграла.  

\ref{c:0nuni}. Для  $p\notin \NN_0$ интегрирование по частям \eqref{e:pq} дает  $\type_{p}^{\infty}\bigl[\nu^{\bal[q]}\bigr]<+\infty$.

\ref{c:0nuiii}. При целом $p=q$ из  \eqref{cBq} интеграл в \eqref{es:pob} --- это  $O(1)$ при $r\to +\infty$, что  дает $|\nu^{\bal[q]}|^{\rad}(r)\leq C \bigl(r^p+O(1)r^q\bigr)=O(r^p)$, $r\to +\infty$. 
\end{proof}
 \subsection{Выметание конечного рода и повторные интегралы}\label{ss:povi}
 Здесь мы используем теорему о повторных интегралах, приведённую в \cite[5.1]{KhI}, в частном случае, когда $k=p=2$, $\RR^2$ отождествлено с $\CC$ и
 $Z=X=\CC_*:=\CC\setminus \{0\}$, а в роли семейства $T$ из \cite[(5.2)]{KhI}  рассматривается семейство выметаний рода $q$ мер Дирака  $\delta_z$ из верхней полуплоскости $\CC^{\up}$ на $\CC_{\overline{\lw}}$.
Для всех точек  $z\in \CC_{\overline{\lw}}$ можно полагать  заряд $\Omega^{[q]}(z,\cdot\,; \CC^{\up})$ рода $q$ равным мере Дирака $\delta_z$. Тогда, используя понятие интеграла семейства мер по заряду из \cite[\S~5, 5.1]{KhI}, можем записать равенство \eqref{df:bal} из определения \ref{df:Bal} в более сжатой форме
 \begin{equation}\label{df:bald}
    \nu^{\bal [q]}(B):=\int_{\CC} \Omega^{[q]}(z, B)\dd \nu(z), \quad B\subset \mathcal B_{\rm b} (\CC).
  \end{equation} 
Версия  \cite[теорема 6]{KhI} для выметания заряда конечного рода $q$ из $\CC^{\up}$ ---
 \begin{theorem}\label{th:IB} Пусть  $\delta_z^{\bal[q]}$ и $\nu^{\bal[q]}$ --- выметания из $\CC^{\up}$на $\CC_{\overline{\lw}}$ соответственно меры Дирака\/ $\delta_z$ и заряда\/ $\nu\in \mathcal M(\CC_*)$. Тогда
\begin{enumerate}[{\rm (i)}] \item\label{dei} для любого  $B\in \mathcal{B}(\CC)$ с 
$ \boldsymbol{1}_B(z):=1$ при $z\in B$ и  $\boldsymbol{1}_B(z):=0$  при $z\notin B$
\begin{equation}\label{1balo} 
\int \mathbf{1}_B (z')\dd \delta_z^{\bal[q]}(z')=\Omega^{[q]}(z,B; \CC^{\up})
 \quad\text{при всех $z\in \CC$}, 
 \end{equation}
 т.\,е. $\delta_z^{\bal[q]}=\Omega^{[q]}(z,\cdot\,; \CC^{\up})$ для всех $z\in \CC$; в частности, при $z\in \CC^{\up}$ для функции распределения\footnote{Обозначение для функции распределения заряда на $\RR$ из \eqref{nuR}.} $\bigl(\delta_{z}^{\bal[q]}\bigr)^{\RR}$ заряда $\delta_{z}^{\bal[q]}$, сосредоточенного на $\RR$, через дифференциалы $\dd$ от неё имеют место равенства
\begin{equation}\label{omOm}
\dd \,\bigl(\delta_{z}^{\bal[q]}\bigr)^{\RR}(t)=\dd\, \bigl(\Omega^{[q]}(z, \cdot)\bigr)^{\RR}(t)\overset{\eqref{df:kP+}}{=}\Poi^{[q]} (t,z) \dd t, \quad z\in \CC^{\up};
\end{equation}

\item\label{deiii} выметание $\nu^{\bal[q]}=(\nu^+)^{\bal[q]}-(\nu^-)^{\bal[q]}$ --- разность
выметаний из  $\CC^{\up}$ верхней и нижней вариаций $\nu$, равных интегралу семейства мер $\delta_z^{\bal[q]}$ по $\nu^{\pm}$, т.\,е.
имеют место равенства\/ {\rm (см. \cite[5.1, (5.3)]{KhI})}
\begin{equation}\label{eqomd}
(\nu^{\pm})^{\bal[q]} =\int \delta_z^{\bal[q]} \dd \nu^{\pm}(z)= \int \Omega^{[q]}(z,\cdot\,; \CC^{\up}) \dd \nu^{\pm}(z); 
\end{equation}

\item\label{deii} для любой непрерывной финитной функции $f\in C_0(\CC_*)$ определена непрерывная на $\CC_*$ функция $z\mapsto \delta_z^{\bal[q]}(f)$, $z\in \CC_*$; 

\item\label{deiv} 
для любой $\nu^{\bal[q]}$-интегрируемой функции $F\colon \CC_*\to \RR_{\pm\infty}$ 
\begin{subequations}\label{c:del} 
\begin{align} 
\int F(z)\dd \nu^{\bal[q]}(z)&=\int \bigl(\mathcal P_{\CC^{\up}}^{[q]}F\bigr)(z)\dd
\nu (z), \tag{\ref{c:del}a}\label{{c:del}a}\\ \intertext{где подынтегральная функция в правой части \eqref{{c:del}a}} 
\bigl(\mathcal P_{\CC^{\up}}^{[q]}F\bigr)(z)&:=\int F(z')\dd \delta_z^{\bal[q]}(z')
 \tag{\ref{c:del}b}\label{{c:del}b} 
\end{align}
\end{subequations} $\nu$-интегрируема и называется далее интегралом Пуассона рода $q$ функции $F$ на  $\CC^{\up}$\/ {\rm (ср. с \cite[3.1, (3.2)]{KhI})}.
\end{enumerate} 
\end{theorem}
\begin{proof} \eqref{dei}. Равенство \eqref{1balo} следует из очевидных равенств 
\begin{equation*} 
\Omega_{\CC^{\up}}^{[q]}(z,B)=\int \Omega_{\CC^{\up}}^{[q]}(z', B) \dd \delta_z(z')
\overset{\eqref{df:bald}}{=}\delta_z^{\bal[q]}(B)=\int \mathbf{1}_B (z')\dd \delta_z^{\bal[q]}(z'). 
\end{equation*} 
Это сразу даёт первое  равенство в \eqref{omOm}, а второе в \eqref{omOm} --- из   \eqref{{Omt1t2}a}.

\eqref{deiii}. Равенства \eqref{eqomd} п.~\eqref{deiii} сразу следуют из равенства \eqref{1balo} п.~\eqref{dei} по определению \ref{df:Bal} в форме \eqref{df:bal}$\Leftrightarrow$\eqref{df:bald}  после интегрирования обеих частей равенства \eqref{1balo} по верхней и нижней вариациям $\nu^{\pm}$.

\eqref{deii}.   	$f\in C_0(\CC_*)$ доопределим по непрерывности $f(0)=0$ и  положим 
 	\begin{equation}\label{dfFF} 
F(z):=\int f(z')\dd \delta_z^{\bal[q]}(z'),  \quad z\in \CC_*.
 	\end{equation} 
 	Необходимо доказать непрерывность функции $F$. На $\CC_{\overline \lw}$ это очевидно, поскольку для  точек $z\in \CC_{\overline \lw}$ имеем $\delta_z^{\bal[q]}=\delta_z$ и $F(z)=f(z)$. В случае $z\in \CC^{\up}$ интеграл из \eqref{dfFF} согласно п.~\eqref{dei} и тождеству \eqref{dqt}
предложения \ref{pr:b} расписывается в виде
\begin{multline}\label{dRt}
F(z)=\int_{\RR}f(t) \dd\, \bigl(\delta_z^{\bal [q]}\bigr)^{\RR}(t)
\overset{\eqref{dqt}}{=}\int_{\RR}f(t) \dd\, (\delta_z^{\bal})^{\RR}(t)
\\+\frac{1}{\pi}\sum_{k=1}^q\left(\int_{\CC^{\up}} \Im \frac{1}{(z')^k}\dd \delta_z (z')\right)\, \frac{1}{k}\int_{\RR}f(t)t^{k}\dd t
\\
=\mathcal P_{\CC^{\up}} f(z)+\frac{1}{\pi}\sum_{k=1}^q\Im \frac{1}{kz^k}\, \int_{\RR}f(t)t^{k}\dd t,
\end{multline}
где  первое слагаемое справа --- гармоническое продолжение посредством классического интеграла Пуассона функции $f$ в верхнюю полуплоскостью \cite[3.1, теорема 6(i)]{KhI}, которое по известным свойствам дает непрерывную функцию на $\CC^{\overline \up}$, совпадающую с $f$. Сумма в правой части  \eqref{dRt}  из ее явного вида непрерывна на $\CC^{\up}_*:=\CC^{\up}\setminus \{0\}$ и стремиться к нулю при приближении к точкам на $\RR_*$ из $\CC^{\up}$. Непрерывность $F$ из \eqref{dfFF} доказана.

\eqref{deiv}. Наконец, условия (1)--(2) 
теоремы о повторных интегралах из \cite[5.1]{KhI} в  	случае, когда в роли мер $\tau_z$ из 
\cite[(5.2)]{KhI} выступают выметания $\delta_z^{\bal[q]}$ рода $q$ мер Дирака, несколько трудоемко проверяются непосредственно, исходя из определений $\nu$-измеримости. Вместе с \eqref{deii} по 	теореме о повторных интегралах из равенства \cite[(5.4)]{KhI}, где на роль зарядов  $\nu$ и $\mu$ выбираем соответственно заряды $\nu^{\bal[q]}$ и $\nu$,  получаем  \eqref{c:del} из \eqref{deiv}.
\end{proof}

\subsection{Выметание конечного рода заряда из угла}\label{ss:ang} 
В этом пункте результаты о выметании конечного рода заряда из полуплоскости адаптируются для выметания заряда, заданного на всей плоскости $\CC$, из угла 
\begin{equation}\label{abr}
\angle(\alpha, \beta) \subset \CC, \quad -\infty <\alpha <\beta\leq \alpha +2\pi, \quad \text{раствора $\beta-\alpha$.}
\end{equation}
Редукция угла $\angle(\alpha , \beta)$  к верхней полуплоскости с помощью конформной замены переменных 
\begin{equation}\label{tildez} 
\tilde z:=(ze^{-i\alpha})^{\frac{\pi}{\beta-\alpha}}, \quad z\in \angle\,[\alpha, \beta],
\end{equation} 
где при  возведении в степень $\frac{\pi}{\beta-\alpha}$ рассматривается аналитическая ветвь,
положительная на  $\RR^+$, позволяет определить гармонический заряд рода $q$ для  угла $\angle(\alpha,\beta)$. Конкретнее, пусть $z\in
\angle (\alpha, \beta)$ и $\tilde B$ --- образ пересечения 
множества $B$ с двумя ограничивающими угол $\angle(\alpha,\beta)$ лучами при редукции угла
$\angle (\alpha, \beta)$ к верхней полуплоскости. Тогда, с учётом  конформной инвариантности гармонической меры для угла $\angle (\alpha, \beta)$ (ср. с \cite[(4.23)]{KhI}), {\it гармонический заряд рода $q$ для угла $\angle (\alpha, \beta)$} --- это  функция 
$\Omega_{\angle(\alpha,\beta)}^{[q]}\colon \angle(\alpha,\beta)\times \mathcal B_{\rm b} (\CC) \to \RR$,
определенная как 
\begin{subequations}\label{se:HC+}
\begin{align} 
&\Omega_{\angle(\alpha,\beta)}^{[q]}(z,B):=
\omega_{\CC^{\up}}(\tilde z,\tilde B)+\frac{1}{\pi} 
 \int_{\tilde B\cap \RR_{\pm\infty}}\Im \frac{t^q-\tilde z^q}{\tilde z^{q}(t-\tilde z)} \dd \lambda_{\RR}(t) 
\tag{\ref{se:HC+}a}\label{seHC+:b}
\\ 
&=
\omega(\tilde z,\tilde B)+\frac{1}{\pi} 
 \sum_{k=1}^{q} \Im \frac{1}{\tilde z^{k}} \,\int_{\tilde B\cap \RR_{\pm\infty}} t^{k-1}\dd \lambda_{\RR}(t),\quad z\in \angle(\alpha,\beta),
\tag{\ref{se:HC+}b}\label{seHC+:d}
\end{align}
\end{subequations}
где для \eqref{seHC+:b} иногда удобно использовать обозначение $\Omega^{[q]}(z,B; \angle(\alpha,\beta))$.
В соответствии с \eqref{df:Bal} для $q\in \NN_0$ и  $\nu \in \mathcal M(\CC)$ 
{\it выметание рода\/ $q$ заряда\/ $\nu$ из угла $\angle(\alpha, \beta)$ 
на $\CC\setminus \angle (\alpha,\beta)$} определяется  равенством 
  \begin{multline}\label{df:bal+}
    \nu_{\CC\setminus \angle(\alpha,\beta)}^{\bal [q]}(B):=\int_{\angle(\alpha,\beta)} \Omega_{\angle(\alpha,\beta)}^{[q]}(z, B\cap\angle(\alpha,\beta))\dd \nu(z)\\+
\nu\bigl(B\cap (\CC\setminus \angle(\alpha,\beta) \bigr), \quad B\subset \mathcal B_b(\CC).
  \end{multline} 
В частности, для меры Дирака $\delta_z$ при $z\in \angle (\alpha,\beta)$ согласно 
равенствам \eqref{omOm} теоремы  \ref{th:IB} имеем равенства 
\begin{equation*}
\dd \,\bigl(\delta_{z}^{\bal[q]}\bigr)^{\RR}(t)=\dd\, \bigl(\Omega^{[q]}(z, \cdot)\bigr)^{\RR}(t)\overset{\eqref{df:kP+}}{=}\Poi^{[q]} (t,z) \dd t, \quad z\in \CC^{\up}.
\end{equation*}

Выметание рода $q$ не меняет часть заряда, сосредоточенную в  $\CC\setminus \angle(\alpha,\beta)$. Утверждения предшествующих подразделов \ref{sss:mu}--\ref{ss:povi} о выметании конечного рода из верхней полуплоскости путём довольно громоздких замен переменных переносятся и на выметание конечного рода из угла. Мы не приводим все аналоги этих утверждений, а  ограничимся только переносом теоремы \ref{th14} на углы $\angle (\alpha, \beta)$ из \eqref{abr}, который дает 

\begin{theorem}[{\rm ср. с \cite[Лемма 2.1.2]{KhDD92}}]
\label{th15} Пусть $\nu\in \mathcal M(\CC)$ --- заряд конечного типа при конечном порядке $p\in \RR^+$ около $\infty$, т.\,е. $\type_p^{\infty}[\nu]{<}+\infty$, 
\begin{equation}\label{dgen}
q:=\Bigl[\frac{\beta-\alpha}{\pi}\,p\Bigr] \text{--- целая часть числа $\frac{\beta-\alpha}{\pi}p$,}
\end{equation} 
$0\notin \supp \nu$, т.\,е. для некоторого $r_0>0$ выполнено \eqref{Dr0}.
\begin{enumerate}[{\rm 1.}]
\item\label{c:0nuniii+} Тогда существуют выметание $\nu_{\CC\setminus \angle(\alpha,\beta)}^{\bal[q]}$ из угла $\angle(\alpha,\beta)$ и  постоянная $C:=\const_{\nu,p, \alpha,\beta}$,
для  которых при всех $r\in \RR^+$ 
\begin{multline}\label{es:pob+}
\bigl|\nu_{\CC\setminus \angle(\alpha,\beta)}^{\bal[q]}\bigr|^{\rad}(r)
\\
\leq C \left(r^p+r^\frac{\pi q}{\beta-\alpha}\Biggl|\;\int\limits_{(D(r)\setminus D(r_0))\cap  \angle(\alpha,\beta} \Im \frac{1}{(ze^{-i\alpha})^\frac{\pi q}{\beta-\alpha}}\dd \nu (z)\Biggr|\right).
\end{multline}

\item\label{c:0nui} При любых $p$  выметание $\nu_{\CC\setminus \angle(\alpha,\beta)}^{\bal[q]}$ из $\angle(\alpha,\beta)$ удовлетворяет условию 
\begin{equation}\label{es:plog+}
\bigl|\nu_{\CC\setminus \angle(\alpha,\beta)}^{\bal[q]}\bigr|^{\rad}(r)=O(r^p\log  r)\quad \text{при $r\to +\infty$}.
\end{equation}

\item\label{c:0nuni+} Если  $\frac{\beta-\alpha}{\pi}p\in \RR^+\setminus \NN_0$ --- нецелое число,  то 
выметание $\nu_{\CC\setminus \angle(\alpha,\beta)}^{\bal[q]}$ из $\angle(\alpha,\beta)$ конечного типа при том же порядке $p$, т.\,е.  $\type_{p}^{\infty}\bigl[\nu_{\CC\setminus \angle(\alpha,\beta)}^{\bal[q]}\bigr]<+\infty$.  
 
\item\label{c:0nuiii+} Если  $q=\frac{\beta-\alpha}{\pi}p\in \NN_0$ --- целое число, --- в частности, $\frac{\pi q}{\beta-\alpha}=p$, --- и для $\nu$ в угле $\angle(\alpha,\beta)$
 выполнено условие Бляшке рода $p$ (около $\infty$), а именно:
\begin{multline}\label{ka+}
\Biggl|\;\int\limits_{(D(r)\setminus D(r_0))\cap \angle(\alpha,\beta)} \Im \frac{1}{(ze^{-i\alpha})^\frac{\pi q}{\beta-\alpha}}\dd \nu (z)\Biggr|\\
=\Biggl|\;\int\limits_{(D(r)\setminus D(r_0))\cap \angle(\alpha,\beta)} \Im \frac{1}{(ze^{-i\alpha})^p}\dd \nu (z)\Biggr|=O(1), \quad r\to +\infty,
\end{multline}
 то  выметание $\nu_{\CC\setminus \angle(\alpha,\beta)}^{\bal[q]}$ из угла $\angle(\alpha,\beta)$  конечного типа при порядке $p$.
\end{enumerate}
\end{theorem}
\begin{remark}\label{remcl}
Очевидно, классическое условие Бляшке для заряда $\nu$ из
\cite[определение 4.6]{KhI} --- это условие Бляшке рода $1$ по определению \eqref{ka+}.
\end{remark}
\begin{remark}\label{remA1} При $\alpha =0$ и $\beta=2\pi$ из теоремы \ref{th15} сразу получаем теорему A1 из подраздела \ref{singleray}, п.~\ref{b1ray}, о выметание заряда  $\nu\in \mathcal M(\CC)$ конечного типа при порядке $p$ на $\RR^+$ из $\CC\setminus \RR^+$.  

\end{remark}
\begin{remark}\label{remRay}
Для произвольного заряда (меры) $\nu\in \mathcal M(\CC)$ конечного типа  при порядке $p$, исходя из классического выметания рода $0$ из первой части \cite{KhI} для допустимой системы лучей, в которой раствор каждого дополнительного угла менее $\pi/p$, и конечного числа выметаний рода $q$ из \eqref{dgen} для каждого дополнительного к системе лучей угла раствора не менее $\pi/p$ можно построить {\it глобальное выметание $\nu_{S}^{{\Bal}}$  на  любую замкнутую систему лучей\/ $S$ c началом в нуле} по простой схеме:{\it
\begin{enumerate}[{\rm (i)}]
\item\label{bSi} для некоторого произвольного $r_0\in \RR^+_*$ представить $\nu$ в виде суммы 
\begin{equation}\label{nui0}
\nu=\nu\bigm|_{D(r_0)}+\nu\bigm|_{\CC \setminus  D(r_0)}=:\nu_0+\nu_{\infty};
\end{equation}
\item\label{bSii} построить классическое выметание рода $0$ заряда $\nu_0$ на систему лучей $S$ \cite[теоремы 2--4]{KhI};
\item\label{bSiii} произвести классическое выметание рода $0$ одновременно из всех дополнительных к $S$ углов раствора $<\pi/p$ и из конечного числа дополнительных к $S$ углов \eqref{abr},  раствора $\geq \pi/p$, в которых полная вариация $|\nu|$ заряда $\nu$ удовлетворяет классическому условию Бляшке рода $1$ около $\infty$ в угле $\angle(\alpha,\beta)$ \cite[теоремы 2--4, определение 4.6]{KhI};  
\item\label{bSiv} в, возможно,  оставшемся конечном числе дополнительных углов \eqref{abr} раствора $\geq \pi/p$, в которых не выполнено условие Бляшке рода $1$, построить выметание \eqref{df:bal+} рода  $q$, выбранного как в \eqref{dgen}.
\end{enumerate}}
Последовательность применения пунктов \eqref{bSii}--\eqref{bSiv} здесь  можно поменять.
При этом в результате действий по пунктам \eqref{bSi}--\eqref{bSiii} выметание будет оставаться зарядом (соответственно мерой) конечного типа при порядке $p$, а при необходимости применения п.~\eqref{bSiv}
результатом будет заряд и  возможны вариации его роста в соответствии с соотношениями \eqref{es:pob+}, \eqref{es:plog+} и пунктами \ref{c:0nuniii+}--\ref{c:0nuiii+}  теоремы \ref{th15}.
\end{remark}

\section{Выметание конечного рода $\delta$-субгармонической функции}\label{bqsf} 
\setcounter{equation}{0}

\subsection{Выметание $\delta$-субгармонической функции конечного порядка из верхней полуплоскости}\label{bduus}

\begin{definition}\label{df:Bv}
Пусть $v\in \dsbh_*(\CC)$. Функцию $v^{{\Bal}}\in \dsbh_*(\CC)$ называем {\it выметанием функции\/ $v$ из верхней полуплоскости\/ $\CC^{\up}$ на $\CC_{\overline \lw}$}, если $v^{{\Bal}}=v$ на $\CC_{\overline \lw}$ вне некоторого полярного множества и  ее сужение $v^{{\Bal}}\bigm|_{\CC^{\up}}$ на $\CC^{\up}$ --- гармоническая функция. 

Выметание $v^{{\Bal}}$, вообще говоря, не единственно. 
\end{definition}

\begin{theorem}\label{thdsb}  Пусть $v\in \dsbh_*(\CC)$, 
с  зарядом Рисса $\nu:=\nu_v\in \mathcal M(\CC)$ конечного типа при порядке $p\in \RR_*^+$, $q:=[p]$.  Для произвольного $r_0\in \RR_*^+$ представим заряд Рисса $\nu$ 
в виде суммы зарядов 
\begin{equation}\label{nui}
\nu=\nu\bigm|_{D(r_0)\cap \CC^{\up}}+\nu\bigm|_{\CC \setminus  (D(r_0)\cap \CC^{\up})}=:\nu_0+\nu_{\infty}.
\end{equation}
Тогда существует  выметание $v^{{\Bal}}$ с зарядом Рисса $\nu_0^{\bal[0]}+ \nu_{\infty}^{\bal[q]}$, представимое в виде  $v^{{\Bal}}=v_+-v_-+H$, где $H\in \Har(\CC)$, обе функции  $v_{\pm}\in \sbh(\CC)$ порядка $p$ гармоничны в $\CC^{\up}$, а также 
\begin{enumerate}[{\rm (i)}]
\item\label{vvii}   при произвольном   $p\in \RR_*^+$ выполнены  соотношения
\begin{equation}\label{llogv}
v_\pm (z)\leq O\bigl(|z|^p\log^2 |z|\bigr), \quad z\to \infty.
\end{equation}

\item\label{vvi}  при нецелом $p$ функции  $v_{\pm}$  конечного типа при порядке $p$;

\item\label{vviiA}  
Если  $p\in \NN_0$ 
и для $\nu$ в угле $\angle(0,\pi)$
выполнено условие Бляшке  рода $p$ \eqref{ka+}, то имеют место два соотношения 
\begin{equation}\label{llogvA}
v_\pm (z)\leq O\bigl(|z|^p\log |z|\bigr), \quad z\to \infty.
\end{equation}
\end{enumerate}
Если функция $v$ порядка не выше $p$, т.\,е. $\ord_{\infty}[v]\leq p$ в обозначениях из  \cite[(2.1o)]{KhI}, то в качестве функции  $H\in \Har (\CC)$ можно выбрать гармонический многочлен степени не выше $q$.
\end{theorem} 
\begin{proof} Можем  считать, что $\bigl(D(r_0)\cap \CC^{\up}\bigr)\cap\, \supp \nu=\varnothing$, т.\,е. $\nu_0=0$, поскольку возможность классического выметания $v_{\nu_0}^{\bal[0]}$ из $\CC^{\up}$ с мерой Рисса   $\nu_0^{\bal[0]}$  установлена в  \cite[теорема 8]{KhI} для функций вида
\begin{equation*}
v_{\nu_0}(z):= \int\limits_{D(r_0)\cap \CC^{\up})} \log |z-\zeta |\dd \nu_0(\zeta), \; z\in \CC,\quad \text{с мерой Рисса $\nu_0$}.
\end{equation*}
Таким образом, далее допустимо рассматривать в \eqref{nui} случай $\nu=\nu_{\infty}$. Кроме того, достаточно рассмотреть ситуацию, когда $v\in \sbh_*(\CC)$ --- функция с мерой Рисса $\nu\in \mathcal M^+(\CC)$
конечного типа при порядке $p$
 и задана представлением Вейерштрасса\,--\,Адамара \cite[6.1, (6.6)--(6.9)]{KhI}, \cite[4.2]{HK}:
\begin{equation}\label{WArep}
v(z)=\int K_q(\zeta,z) \dd \nu(\zeta), \; z\in \CC, \quad
K_q(\zeta,z):=\log \Bigl|1-\frac{z}{\zeta}\Bigr|+\sum_{k=1}^{q}\Re \frac{z^{q}}{q{\zeta}^q}\,.
\end{equation}
Необходимо доказать равенство
\begin{equation}\label{WArep+}
\int K_q(\zeta,z) \dd \nu(\zeta)=\int K_q(\zeta,z) \dd \nu^{\bal[q]}(\zeta) \quad\text{для всех  $z\notin \CC^{\up}$}.
\end{equation}
По определению \ref{df:Bal} согласно равенствам \eqref{df:bal} и \eqref{vnvbal}  можно рассматривать только   меру $\nu\in \mathcal M^+(\CC^{\up})$,  сосредоточенную  в $\CC^{\up}$, поскольку выметание рода $q$ не меняет часть заряда, сосредоточенную в $\CC_{\overline{\lw}}$. Если в обозначении  $\delta_{\zeta}$ для меры Дирака в точке $\zeta\in \CC^{\up}$ будет доказано равенство
\begin{equation}\label{Kqdb}
K_q(\zeta,z)=\int K_q(\xi, z) \dd \delta_{\zeta}^{\bal[q]}(\xi)\quad\text{для всех $z\in \CC_{\overline \lw}$, 
$\zeta\in \CC^{\up}$},
\end{equation}
то справедливо \eqref{WArep+},  поскольку из \eqref{Kqdb} по теореме \ref{th:IB}\eqref{deiv}
из равенств  \eqref{c:del} с $F(\zeta)=K_q(\zeta, z)$ при отмеченных  соглашениях о мере $\nu$ 
с учётом \eqref{Dr0q} имеем цепочку равенств 
\begin{multline*}
\int K_q(\zeta,z) \dd \nu^{\bal[q]}(\zeta) \overset{\eqref{{c:del}a}}{=}
\int \bigl({\mathcal P}_{\CC^{\up}}^{[q]} K_q(\cdot, z)\bigr) (\zeta)\dd \nu (\zeta)
\\
\hspace{-4mm}\overset{\eqref{{c:del}b}}{=}\int \int K_q(\xi, z) \dd \delta_{\zeta}^{\bal[q]}(\xi) \dd \nu (\zeta)
\overset{\eqref{Kqdb}}{=}\int K_q(\zeta,z) \dd \nu(\zeta) \text{ для всех $z\notin \CC^{\up}$.}
\end{multline*}
Итак, для завершения доказательства достаточна 
\begin{lemma}\label{lKqd}  Имеет место\/ \eqref{Kqdb}.
\end{lemma}
\begin{proof}[Доказательство леммы \ref{lKqd}]
Можно расписать правую часть \eqref{Kqdb}, исходя из тождества \eqref{dqt} предложения \ref{pr:b}, c помощью функции распределения $\bigl(\delta_{\zeta}^{\bal[q]}\bigr)^{\RR}$ заряда $\delta_{\zeta}^{\bal[q]}$, сосредоточенного на $\RR$, в виде
\begin{multline}\label{dqtd}
\hspace{-3mm}\int K_q(\xi, z) \dd \,\delta_{\zeta}^{\bal[q]}(\xi)=
\int_{\RR} K_q(t, z) \dd \,\bigl(\delta_{\zeta}^{\bal [q]}\bigr)^{\RR}(t)
\overset{\eqref{omOm}}{=}\int_{\RR} K_q(t, z) \Poi^{[q]} (t,\zeta) \dd t
\\
\overset{\eqref{df:kPq}}{=}\int_{\RR} K_q(t, z) \,\frac{1}{\pi} \, \Im\frac{t^{q}}{ {\zeta}^{q}(t-\zeta)}
 \dd t, \quad  z\notin \CC^{\up}, \; \zeta \in \CC^{\up}. 
\end{multline}
Рассмотрим комплексификации подынтегральных выражений. Полагаем 
 \begin{equation}\label{CKP}
\mathcal K_q(w,z)\overset{\eqref{WArep}}{=}
\log \Bigl(1-\frac{z}{w}\Bigr)+\sum_{k=1}^{q} \frac{z^{q}}{q{w}^q}, \quad \CC^{\overline{\up}}\ni w\neq z \in \CC_{\overline{\lw}}, 
\end{equation}
где для функции $w\mapsto \log(1-z/w)$ выбрана одна из аналитических ветвей в $\CC^{\up}$, что возможно ввиду  
$z/w\neq 1$ в \eqref{CKP}. Из разложения в ряд Тейлора этой логарифмической функции при  $|w|>|z|$ получаем \cite[лемма 4.2]{HK}
\begin{equation}\label{razK}
\mathcal K_q(w,z)=O\bigl(|w|^{-q-1}\bigr) \quad \text{при $\CC^{\overline{\up}}\ni w\to \infty$}.
\end{equation}
Кроме того,  из вида функции \eqref{CKP} сразу следует 
\begin{equation}\label{raz0}
\mathcal K_q(w,z)=O\bigl(|w|^{-q}\bigr), \quad \text{при $\CC^{\overline{\up}}\ni w\to 0$},
\end{equation}
а также возможна одна логарифмическая особенность на $\RR$ при  $w=z\in \RR$.
Следующий интеграл по границе верхнего полукруга $\partial D^{\up} (R)$, где $D^{\up}(R):=D(R)\cap \CC^{\up}$,  $R>|w|$, обозначаемый как
\begin{equation}\label{Int1}
\mathcal I_q(\zeta,z):=\int_{\partial D^{\up} (R)} \mathcal K_q(w,z) \,\frac{w^q}{2\pi i \zeta^q}\, \frac{\dd w}{w-\zeta}\,,
\end{equation}
абсолютно сходится при каждом $\zeta \notin \RR$, так как подынтегральное выражение в нём непрерывно всюду на $\RR$, кроме возможной лишь логарифмической особенности  на $\RR$ при  $w=z\in \RR$, а  в нуле существует предел 
\begin{equation*}
\lim_{w\to 0} \mathcal K_q(w,z)\, \frac{w^q}{2\pi i \zeta^q}\,\frac{1}{w-\zeta}
\overset{\eqref{raz0},\eqref{CKP}}{=}-\frac{z^q}{2\pi i \zeta^{q+1}}\,.
\end{equation*}
При $\zeta\in \CC^{\up}$ по теореме о вычетах при $R>|w|$ имеем
\begin{equation}\label{IK}
\mathcal I_q(\zeta, z)=\mathcal K_q(\zeta,z)\, \frac{\zeta^q}{\zeta^q}=\mathcal K_q(\zeta,z), 
\end{equation}
а при замене $\zeta\in \CC^{\up}$ на сопряжённое $\bar \zeta\in \CC_{\lw}$ в \eqref{Int1} ввиду голоморфности подынтегрального выражения в $D^{\up}$ получаем
$\mathcal I_q(\bar \zeta, z)=0$. Отсюда, вычитая последнее равенство из \eqref{IK}, из вида $\mathcal I_q$ в \eqref{Int1} имеем
\begin{equation}\label{InKv}
\int_{\partial D^{\up} (R)} \mathcal K_q(w,z)
\frac{w^q}{2\pi i}\left(\frac{1}{\zeta^q(w-\zeta)}-\frac{1}{{\bar\zeta}^q(w-\bar\zeta \,)}\right) \dd w
=\mathcal K_q(\zeta,z).
\end{equation} 
При стремлении $R\to +\infty$ часть интеграла \eqref{InKv} по верхней	 полуокружности 
$\CC^{\up}\cap \partial D(R)$ стремится к нулю, поскольку  ввиду   \eqref{razK} подынтегральное выражение в этой части интеграла  ведёт себя как $O(R^{-2})$ при $R\to +\infty$. Таким образом, из 
\eqref{InKv} следует равенство 
\begin{equation*}
\int_{\RR} \mathcal K_q(t,z)\,
\frac{t^q}{2\pi i}\left(\frac{1}{\zeta^q(t-\zeta)}-\frac{1}{{\bar\zeta}^q(t-\bar\zeta \,)}\right) \dd t
=\mathcal K_q(\zeta,z), \quad \zeta\in \CC^{\up}, z\in \CC_{\overline \lw}.
\end{equation*}
Здесь часть подынтегрального выражения 
\begin{equation*}
\frac{t^q}{2\pi i}\left(\frac{1}{\zeta^q(t-\zeta)}-\frac{1}{{\bar\zeta}^q(t-\bar\zeta \,)}\right)=
\frac{1}{\pi}\,\Im \frac{t^q}{\zeta^q(t-\zeta)}\,, \quad t\in \RR,
\end{equation*}
всегда вещественна и последний интеграл можно переписать в виде
\begin{equation*}
\int_{\RR} \mathcal K_q(t,z)\, \frac{1}{\pi}\,\Im \frac{t^q}{\zeta^q(t-\zeta)} \dd t
=\mathcal K_q(\zeta,z), \quad \zeta\in \CC^{\up}, z\in \CC_{\overline \lw}.
\end{equation*}
Для вещественных частей обеих сторон этого равенства имеем  
\begin{equation*}
\int_{\RR} \Re  \mathcal K_q(t,z)\cdot \frac{1}{\pi}\,\Im \frac{t^q}{\zeta^q(t-\zeta)} \dd t
=\Re \mathcal K_q(\zeta,z), \quad \zeta\in \CC^{\up}, z\in \CC_{\overline \lw}.
\end{equation*}
Поскольку $\Re  \mathcal K_q(t,z)\overset{\eqref{WArep},\eqref{CKP}}{=}K_q(t,z)$ при  $t\in \RR$, то отсюда ввиду \eqref{dqtd} получаем требуемое 
равенство \eqref{Kqdb} и лемма доказана. 
\end{proof}
Из доказанного имеем часть теоремы  \ref{thdsb} до п.~\eqref{vvii}--\eqref{vvi}, где две 
функции $v_{\pm}\in \sbh(\CC)$ с мерами Рисса $(\nu^{\bal[q]})^{\pm}$, удовлетворяющими соотношению \eqref{es:plog} теоремы \ref{th14}, могут быть выбраны в виде представления Вейерштрасса\,--\,Адамара \cite[4.2]{HK}
 \begin{equation}\label{reprvpm}
v_{\pm}(z)=\int K_q(\zeta,z) \dd\, (\nu^{\bal[q]})^{\pm}(\zeta)+H_{\pm}(z), \quad v=v_+-v_-+H,
\end{equation}
а  при $\ord_{\infty}[v]\leq p$ функция $H\in \Har (\CC)$ --- гармонический многочлен   степени не выше $q$. Рост субгармонических функций \eqref{reprvpm} легко оценивается сверху, а именно: при нецелом $p$ получаем п.~\eqref{vvi}, а при целом $p$ --- п.~\eqref{vvii} с соотношением \eqref{llogv} подобно оценкам из \cite[лемма 4.1, теорема 4.2]{HK}. В условиях п. \eqref{vviiA} по теореме \ref{th14}, п. \ref{c:0nuiii}, меры  $(\nu^{\bal[q]})^{\pm}$ конечного типа при порядке $p$
и вновь из представления \eqref{reprvpm} стандартным путем \cite[лемма 4.1, теорема 4.2]{HK} получаем \eqref{llogvA}.
\end{proof}

\subsection{Выметание $\delta$-субгармонической функции конечного порядка из угла}\label{babdu}

Редукцией угла $\angle(\alpha , \beta)$ из \eqref{abr}  к верхней полуплоскости с помощью конформной замены переменных \eqref{tildez} с последующим обратным <<возвращением>> к углу $\angle (\alpha,\beta)$ можно распространить теорему \ref{thdsb} предыдущего подраздела \ref{bduus} с помощью теоремы 
\ref{th15} из п.~\ref{ss:ang} на выметание $\delta$-субгармонической функции из угла $\angle (\alpha, \beta)$ вида \eqref{abr}. Определение \ref{df:Bv} переносится на углы практически дословно:

\begin{definition}\label{df:Bghang}
Пусть $v\in \dsbh_*(\CC)$. Функцию $v^{{\Bal}}\in \dsbh_*(\CC)$ называем {\it выметанием функции\/ $v$ из угла\/ $\angle(\alpha, \beta)$ на его дополнение $\CC\setminus \angle(\alpha, \beta)$}, если 
$v^{{\Bal}}=v$ на $\CC\setminus \angle(\alpha, \beta)$ вне полярного множества и  $v^{{\Bal}}\bigm|_{\angle(\alpha, \beta)}$ --- гармоническая функция в угле  $\angle(\alpha, \beta)$.
\end{definition}
 
\begin{theorem}\label{thdsb_a}  
Пусть $v\in \dsbh_*(\CC)$, ее заряд Рисса $\nu:=\nu_v\in \mathcal M(\CC)$ конечного типа при порядке $p\in \RR_*^+$, а число $q$ выбрано как в \eqref{dgen}.  Для произвольного $r_0\in \RR_*^+$ представим заряд Рисса $\nu:=\nu_v$ функции $v$ в виде суммы зарядов 
\begin{equation*}
\nu=\nu\bigm|_{D(r_0)\cap \angle(\alpha,\beta)}+\nu\bigm|_{\CC \setminus  (D(r_0)\cap \angle(\alpha,\beta))}=:\nu_0+\nu_{\infty}.
\end{equation*}
Тогда существует  выметание $v^{{\Bal}}$ с зарядом Рисса
 \begin{equation*}
(\nu_0)_{\CC\setminus\angle(\alpha,\beta)}^{\bal[0]}+ (\nu_{\infty})_{\CC\setminus \angle(\alpha,\beta)}^{\bal[q]},
\end{equation*}
 представимое в виде  $v^{{\Bal}}=v_+-v_-+H$, где
$H\in \Har(\CC)$, обе функции  $v_{\pm}\in \sbh_*(\CC)$ порядка $p$ гармоничны в ${\angle(\alpha,\beta)}$, а также 
\begin{enumerate}[{\rm (i)}]
\item\label{vvii++} при любом  $p$ выполнено   \eqref{llogv};

\item\label{vvii+}   при нецелом 
$\frac{\beta-\alpha}{\pi}p$ и  целом $p\in \NN_0$ выполнено   
\eqref{llogvA}; 

\item\label{vvi+}  при одновременно нецелых $p$  и $\frac{\beta-\alpha}{\pi}p$ имеем  $\type_p^{\infty}[v_{\pm}]<+\infty$;
\item\label{vvii+++}  
 Если  $q=\frac{\beta-\alpha}{\pi}p\in \NN_0$ --- целое число, --- в частности, $\frac{\pi q}{\beta-\alpha}=p$, --- и для $\nu$ в угле $\angle(\alpha,\beta)$
 выполнено условие\/ Бляшке рода $p$, т.\,е. \eqref{ka+}, то выполнено соотношение \eqref{llogvA}.
\end{enumerate}
Если функция $v$ порядка не выше $p$, т.\,е. $\ord_{\infty}[v]\leq p$, то в качестве $H$ можно выбрать гармонический многочлен степени не выше $[p]$.
\end{theorem}

Явные выкладки с заменами переменных при выводе теоремы \ref{thdsb_a} из теоремы \ref{thdsb} громоздки, хотя идейно не вносят ничего нового по сравнению с доказательством теоремы \ref{thdsb}. Незначительное чисто техническое изменение
 после редукции \eqref{tildez} к $\CC^{\up}$ возникает для  аналога леммы \ref{lKqd}, который приходится доказывать с заменой переменных $\zeta$ и $z$ на некоторую их степень, явно выражаемую через $p,\alpha, \beta$. Здесь мы эти детали опускаем.   

\subsection{Условие Бляшке конечного рода в терминах $\delta$-субгармо\-н\-и\-ч\-е\-с\-к\-ой функции}\label{CBsf}

Ключевое для частей теорем \ref{thdsb}\eqref{vviiA}  и \ref{thdsb_a}\eqref{vvii+++} условие Бляшке рода $p$  \eqref{ka+} формулируется в терминах заряда  Рисса $\nu_v$ функции $v\in \dsbh(\CC)$. Естественно рассмотреть возможность его формулировки в терминах самой функции $v$ без привлечения её заряда Рисса $\nu_v$. 

Всюду в настоящем  подразделе \ref{CBsf} в обозначениях  \eqref{abr} предполагаем, что числа $p, q\in \RR^+$ связаны условиями
\begin{subequations}\label{pqab}
\begin{align}
q&=\frac{\beta-\alpha}{\pi}\,p \in \NN,
\tag{\ref{pqab}a}\label{pqab1}
\\
\frac{\pi}{p}&= \frac{\beta-\alpha}{q}  \,.
\tag{\ref{pqab}b}\label{pqab2}
\end{align}
\end{subequations} 

Важную роль далее будет играть интеграл 
\begin{equation}\label{fK:abp+}
J_{\alpha,\beta}^{[p]}(r_0,r;v)
:=\int_{r_0}^r\frac{v(te^{i\alpha})+(-1)^{q-1}v(te^{i\beta})}{t^{p+1}} \dd t, \quad 
0<r_0\leq r<+\infty.
\end{equation}

\begin{propos}\label{LKcr} Пусть заряд  Рисса $\nu_v$ функции $v\in \dsbh_*(\CC)$ конечного типа при порядке $p$. При  \eqref{pqab} заряд $\nu_v$ удовлетворяет условию Бляшке рода $p$ из \eqref{ka+} в угле $\angle (\alpha, \beta)$, если и только если при некотором (любом)\/ $r_0\in \RR^+_*$ выполнено соотношение
\begin{subequations}\label{fK:ab} 
	\begin{align} 
&A_{\alpha,\beta}^{[p]}(r_0,r,v)+B_{\alpha,\beta}^{[p]}(r_0,r,v)= O(1) \quad\text{при
$r\to+\infty$, где} \tag{\ref{fK:ab}a}\label{fK:aba} \\
&A_{\alpha,\beta}^{[p]}(r_0,r,v):=\frac{p}{2\pi}\int_{r_0}^r\biggl(\,\frac{1}{t^p
}
-\frac{t^p
}{r^{2p}
}\biggr)
\bigl(v(te^{i\alpha})+(-1)^{q-1}v(te^{i\beta})\bigr)
\frac{\dd t}{t} \tag{\ref{fK:ab}b}\label{fK:abb} \\
&\overset{\eqref{fK:abp+}}{=}\frac{p}{2\pi}
\biggl(J_{\alpha,\beta}^{[p]}(r_0,r;v)
-\frac{1}{r^{2p}}
\int_{r_0}^r \frac{v(te^{i\alpha})+(-1)^{q-1}v(te^{i\beta})}{t^{1-p}}
\dd t\biggr)
\tag{\ref{fK:ab}c}\label{fK:abb+}
 \\
&\overset{\eqref{fK:abp+}}{=}\frac{p^2}{\pi r^{2p}}\int_{r_0}^r
J_{\alpha,\beta}^{[p]}(r_0,t;v)\,t^{2p-1}
\dd t, \tag{\ref{fK:ab}d}\label{fK:abd}
\\
&B_{\alpha,\beta}^{[p]}(r_0,r,v):=\frac{p}{\pi
r^{p}}\int_{\alpha}^{\beta} v(re^{i\theta})\sin
p(\theta-\alpha) \dd \theta. 
\tag{\ref{fK:ab}e}\label{fK:abc} 
\end{align}
\end{subequations} 
При этом если функция $v$ представима в виде разности двух субгармонических функций конечного типа при порядке $p$, 
 то условие  Бляшке рода $p$ в угле $\angle  (\alpha ,\beta)$  для заряда Рисса $\nu_v$ эквивалентно  условию 
\begin{equation}\label{fK:abp}
J_{\alpha,\beta}^{[p]}(r_0,r;v) \overset{\eqref{fK:abp+}}{=}
O(1) \quad \text{при $r\to +\infty$}
\end{equation} 
и, более того, при всех значениях $r_0\leq r<R<+\infty$
\begin{equation}\label{fK:abp+d}
\Biggl|J_{\alpha,\beta}^{[p]}(r,R;v) 
-\int\limits_{(D(R)\setminus D(r))\cap \angle(\alpha,\beta)} \Im \frac{-1}{(ze^{-i\alpha})^p}\dd \nu_v(z)\Biggr|
=O(1).
\end{equation} 
\end{propos}
\begin{proof} 
Представление \eqref{fK:abb+} для $A_{\alpha,\beta}^{[p]}(r_0,r,v)$ сразу следует из определений 
 \eqref{fK:abb} и \eqref{fK:abp+}. 
Перейти от \eqref{fK:abb} к \eqref{fK:abd} можно аналогично \cite[(5.07)]{Levin56}
интегрированием по частям, исходя из  равенств
\begin{multline*}
A_{\alpha,\beta}^p(r_0,r,v)\overset{\eqref{fK:abb}}{=}
\frac{p}{2\pi}\int_{r_0}^r \biggl( 1
-\Bigl(\frac{t}{r}\Bigr)^{2p}\biggr) \dd
\int_{r_0}^t\frac{v(se^{i\alpha})+(-1)^{q-1}v(se^{i\beta})}{t^{p+1}} \dd s
\\
=\frac{p}{2\pi}
\,2p \int_{r_0}^r 
\biggl(\int_{r_0}^t\frac{v(se^{i\alpha})+v(se^{i\beta})}{s^{p+1}} \dd s\biggr)
\frac{t^{2p-1}}{r^{2p}} \dd t,
\end{multline*}
где внутренний интеграл  равен $J_{\alpha,\beta}(r_0,t; v)$, $t\geq r_0$.
	
Можно ограничиться рассмотрением субгармонической функции $v$ c мерой Рисса $\nu_v$ конечного типа при порядке $p$.

В условиях  \eqref{abr} и  \eqref{pqab} угол $\angle (\alpha,\beta)$ можно разбить на $q$ равных не пересекающихся  углов
\begin{equation}\label{distab}
\angle(\alpha_k,\beta_k),\quad k=1,2, \dots, q, \quad  \alpha_k=\alpha+(k-1)\,\frac{\pi}{p}, \quad \beta_k=\alpha_{k+1} 
\end{equation}  
раствора \eqref{pqab2}. Применим в каждом угле $\angle(\alpha_k,\beta_k)$ формулу Карлемана в виде 
\cite[лемма 6.1]{KhI} с конформным переносом в угол $\angle(\alpha,\beta)$. В обозначениях из \cite[предложение 6.2, (6.12)]{KhI} в случае $\type_p^{\infty}[\nu_p]<+\infty$ это дает
\begin{itemize}
\item[{[$\rm Eq_k$]}] соотношение
\begin{multline*}
A_{\alpha_k,\beta_k}(r_0,r,v)+B_{\alpha_k,\beta_k}(r_0,r,v)
\\=
 \int\limits_{(D(r)\setminus D(r_0))\cap \angle(\alpha_k,\beta_k)} \Im \biggl(\frac{-1}{(ze^{-i\alpha_k})^{\frac{\pi}{\beta_k-\alpha_k}}}
-\frac{(ze^{-i\alpha_k})^{\frac{\pi}{\beta_k-\alpha_k}}}{r^{\frac{2\pi}{\beta_k-\alpha_k}}}\biggr)\dd \nu_v(z)+O(1) \\
=\biggl|\text{ввиду
$\type_{p}^{\infty}[\nu_v]<+\infty$}\biggr|\\
=\int\limits_{(D(r)\setminus D(r_0))\cap \angle(\alpha_k,\beta_k)}
\Im\frac{-1}{(ze^{-i\alpha_k})^{\frac{\pi}{\beta_k-\alpha_k}}}\dd \nu_v(z)+O(1) \\
\overset{\eqref{distab}}{=}
\int\limits_{(D(r)\setminus D(r_0))\cap \angle(\alpha_k,\beta_k)}
\Im\frac{-1}{(ze^{-i\alpha})^{p}(-1)^{k-1}}\dd \nu_v(z)+O(1) \quad \text{при $r\to+\infty$},
\end{multline*}
\end{itemize}
Оставляя в этих $q$ равенствах ${\rm [\rm Eq_k]}$ только правые и левые части,
произведем над этими  равенствами действия вида
$\sum_{k=1}^q (-1)^{k-1} \cdot {\rm [\rm Eq_k]}$,
которые ввиду равенств
\begin{align*}
\sum_{k=1}^q (-1)^{k-1}A_{\alpha_k,\beta_k}(r_0,r,v)
&\overset{\eqref{pqab},\eqref{distab}}{=}A_{\alpha,\beta}^{[p]}(r_0,r,v),\\
\sum_{k=1}^q (-1)^{k-1}B_{\alpha_k,\beta_k}(r_0,r,v) 
&\overset{\eqref{pqab},\eqref{distab}}{=}B_{\alpha,\beta}^{[p]}(r_0,r,v),\\
\sum_{k=1}^q (-1)^{k-1}\int\limits_{(D(r)\setminus D(r_0))\cap \angle(\alpha_k,\beta_k)}& \Im\frac{-1}{(ze^{-i\alpha})^{p} (-1)^{k-1}}\dd \nu_v
\\
\overset{\eqref{ka+}}{=}\int\limits_{(D(r)\setminus D(r_0))\cap \angle(\alpha,\beta)} &\Im \frac{1}{(ze^{-i\alpha})^p}\dd \nu (z) 
\end{align*}
дают соотношение
\begin{multline}\label{eq:AB}
A_{\alpha,\beta}^{[p]}(r_0,r,v)+B_{\alpha,\beta}^{[p]}(r_0,r,v)\\=\int\limits_{(D(r)\setminus D(r_0))\cap \angle(\alpha,\beta)} \Im \frac{-1}{(ze^{-i\alpha})^p}\dd \nu_v(z)+O(1)\quad \text{при $r\to +\infty$.}
\end{multline}  
Отсюда следует эквивалентность соотношения 
\eqref{fK:aba} условию Бляшке \eqref{ka+} рода $p$ в угле $\angle(\alpha,\beta)$.

Если $\type_p^{\infty}[v]<+\infty$, то по \cite[лемма 6.2, (6.17)]{KhI} в \eqref{eq:AB} второе слагаемое в левой части можно заменить на $O(1)$ при $r\to +\infty$ и в этом случае
\begin{multline}\label{JintO}
J_{\alpha,\beta}^{[p]}(r_0,r;v)
-\frac{1}{r^{2p}}
\int_{r_0}^r \bigl(v(te^{i\alpha})+(-1)^{q-1}v(te^{i\beta})\bigr)
t^{p-1} \dd t\\ 
\overset{\eqref{fK:abb+}}{=}
\int\limits_{(D(r)\setminus D(r_0))\cap \angle(\alpha,\beta)} \Im \frac{-1}{(ze^{-i\alpha})^p}\dd \nu_v(z)+O(1) \quad\text{при $r\to +\infty$}
\end{multline}
При этом из \eqref{abr} и \eqref{pqab1} следует, что $p\geq 1/2$.
\begin{lemma}\label{eray}
Пусть $p\in \RR^+$ и $v\in \sbh_*(\CC)$, $\type_p^{\infty}[v]<+\infty$. Тогда для любого $r_0\in \RR^+_*$ справедливо соотношение
\begin{equation}\label{rps}
\int_{r_0}^{r} \bigl|v(t)\bigr|t^{s-1} \dd t=
\begin{cases}
O\bigl(r^{p+s^+}\bigr)&\text{ для $s\in \RR_*$},\\
O\bigl(r^{p}\log r\bigr)&\text{ для $s=0$}
\end{cases}
\quad \text{при $r\to +\infty$}. 
\end{equation}
\end{lemma}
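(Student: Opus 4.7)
The plan is to reduce everything to Theorem A3 applied with the weight $g(t) := t^{s-1}$ on $[r_0, +\infty)$. The finite-type assumption $\type_p^{\infty}[v] < +\infty$ delivers $M_v^+(r) = O(r^p)$ as $r \to +\infty$, which supplies the outer factor in both estimates \eqref{ginc} and \eqref{ubg}. I would fix, say, $b := 1/4$ throughout and split the argument according to whether $g$ is non-decreasing (i.e.\ $s \geq 1$) or non-increasing (i.e.\ $s \leq 1$).

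In the non-decreasing regime $s \geq 1$, a direct application of \eqref{ginc} gives
\[
\int_{r_0}^R |v(t)|\, t^{s-1} \dd t \leq C\, M_v^+\bigl((1+2b)R\bigr) \cdot R \cdot \bigl((1+4b)R\bigr)^{s-1} = O\bigl(R^{p+s}\bigr),
\]
which matches \eqref{rps} since $s^+ = s$ in this range. In the non-increasing regime $s \leq 1$, \eqref{ubg} gives the bound $O(R^p) \cdot I_s(R)$ with $I_s(R) := \int_{(1-b)r_0}^R r^{s-1} \dd r$, and I would evaluate $I_s(R)$ directly in three subcases: $I_s(R) = (R^s - ((1-b)r_0)^s)/s = O(R^s)$ for $0 < s \leq 1$, producing $O(R^{p+s}) = O(R^{p+s^+})$; $I_0(R) = \log R - \log((1-b)r_0) = O(\log R)$, producing the logarithmic case $O(R^p \log R)$; and $I_s(R) = O(1)$ for $s < 0$, since $r^{s-1}$ is then integrable near $+\infty$, producing $O(R^p) = O(R^{p+s^+})$. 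The three subcases together exhaust \eqref{rps}.

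I expect no serious obstacle: the proof is essentially a case split on the sign and magnitude of $s$, with Theorem A3 doing all the analytic work. The one point requiring attention is the borderline $s = 0$, which is what produces the logarithmic factor from $\int r^{-1} \dd r$; and the appearance of $s^+$ rather than $s$ in the exponent is explained by the regime $s < 0$, where $I_s(R)$ saturates at a finite limit and all growth comes from the single factor $M_v^+\bigl((1+2b)R\bigr) = O(R^p)$.
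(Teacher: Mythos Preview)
Your proposal is correct and follows exactly the paper's own argument: the paper's proof of Lemma~\ref{eray} simply says to apply Theorem~A3 with estimate~\eqref{ginc} for $s\geq 1$ and with estimate~\eqref{ubg} for $s<1$, which is precisely your case split on the monotonicity of $g(t)=t^{s-1}$. Your additional subcase analysis of $\int_{(1-b)r_0}^{R} r^{s-1}\,\mathrm{d}r$ for $0<s\leq 1$, $s=0$, and $s<0$ is the natural unpacking of the terse reference in the paper and is carried out correctly.
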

\begin{proof}[Доказательство леммы \ref{eray}]
При $s\geq 1$ достаточно применить теорему  A3  с оценкой \eqref{ginc}, а при $s<1$ --- 
ту же теорему  A3 c  оценкой  \eqref{ubg}.
\end{proof}

По лемме \ref{eray} с $s=p$ вычитаемый интеграл в левой части \eqref{JintO} c множителем $r^{-2p}$ ведет себя как $O(1)$ при $r\to +\infty$. Таким образом, в рассматриваемом случае выполнено \eqref{fK:abp+d}, согласно которому  условие  Бляшке рода $p$ в угле $\angle  (\alpha ,\beta)$  для заряда Рисса $\nu_v$ эквивалентно  условию \eqref{fK:abp}. 
\end{proof} 

По аналогии с целыми функциями класса A \cite[гл.~V]{Levin56}  примем
\begin{definition}\label{df:clA} 	 Функция $v\in \dsbh_*(\CC)$ c зарядом Рисса $\nu_v$ конечного типа при порядке $p\in \RR^+$ принадлежит {\it классу Ахиезера  рода $p$,\/} или {\it классу\/ {\rm A} рода $p$, относительно угла $\angle (\alpha, \beta)$} в условиях \eqref{pqab1}, если выполнено одно  из двух эквивалентных друг другу в силу предложения \ref{LKcr}    условий:
\begin{enumerate}[{\rm (i)}]
\item\label{Aci} заряд  Рисса $\nu_v$ функции $v$ удовлетворяет условию Бляшке рода $p$  из \eqref{ka+} в угле $\angle (\alpha, \beta)$; 
\item\label{Acii} функция $v$ удовлетворяет условию \eqref{fK:aba} с соответствующими упрощением \eqref{fK:abp}
при $\type_p^{\infty}[v]<+\infty$.
\end{enumerate}
\end{definition}

\begin{remark}\label{rgen} При доказательстве предложения \ref{LKcr} можно было бы использовать и некоторое обобщение формулы Карлемана без разбиения угла $\angle(\alpha, \beta)$ вида \eqref{distab}, основанное при условиях \eqref{pqab} на дважды примененной второй формуле Грина к функции $v\in \sbh_*(\CC)$ и к функции
\begin{equation*}
\Im \biggl(\frac{1}{(ze^{-i\alpha})^{p}}
-\frac{(ze^{-i\alpha})^{p}}{r^{2p}}\biggr),
\end{equation*} 
а также ее инверсии относительно $\partial D(r_0)$, $r_0<r$, так, как это проделано при доказательстве \cite[теорема 1]{Kha88}.  
\end{remark}

\begin{remark}\label{remRay+}
Для функции $v\in \dsbh_*(\CC)$,   представимой в виде разности субгармонических функций конечного типа при порядке $p\in \RR_*^+$, c зарядом Рисса $\nu=\nu_v$, исходя из классического выметания из первой части \cite{KhI} и конечного числа выметаний для каждого дополнительного к системе лучей угла раствора не менее $\pi/p$, можно построить {\it глобальное выметание
$v_{S}^{{\Bal}}$ на систему лучей $S$}, 
по простой схеме:{\it
\begin{enumerate}[{\rm (i)}]
\item\label{bSiii+} произвести классическое выметание $v^{\bal}$ рода $0$ одновременно из всех дополнительных к $S$ углов раствора $<\pi/p$ и из конечного числа дополнительных к $S$ углов \eqref{abr}  раствора $\geq \pi/p$, в которых выполнено классическое условие Ахиезера рода $1$ около $\infty$ в угле $\angle(\alpha,\beta)$ \cite[теорема 8, предложение 6.2, определение 6.4]{KhI};  
\item\label{bSiv+} В, возможно,  оставшемся конечном числе дополнительных углов \eqref{abr} раствора $\geq \pi/p$, в которых не выполнено условие Ахиезера  рода $1$ из определения\/ {\rm \ref{df:clA},} построить выметание $(v^{\bal})^{\Bal}$ рода  $q$ так же, как в теореме\/ {\rm \ref{thdsb_a}}.
\end{enumerate}}
Последовательность применения пунктов \eqref{bSiii+} и \eqref{bSiv+} здесь  можно поменять местами.
При этом в результате действий по п.~\eqref{bSiii+} выметание $v^{\bal}$ будет оставаться функцией конечного типа при порядке $p$, к тому же субгармонической, если субгармонична исходная функция $v$. При необходимости применения п.~\eqref{bSiv+}
результатом будет $\delta$-субгармоническая функция на $\CC$ и   возможны вариации ее роста в соответствии с  пунктами \eqref{vvii++}--\eqref{vvii+++}  теоремы \ref{thdsb_a}.
Более детально, по теореме \ref{thdsb_a} имеем $v_{S}^{{\Bal}}=v_+-v_-+H$, где функции $v_{\pm}\in \sbh_*(\CC)$ удовлетворяют, как минимум, одному из ограничений   \eqref{llogv} или \eqref{llogvA}, $H\in \Har(\CC)$ многочлен степени не выше $[p]$, а  заряд Рисса выметания 
$v_{S}^{{\Bal}}$ --- это выметание $\nu_{S}^{\Bal}$ заряда  $\nu$, построенное по соответствующей схеме \eqref{bSi}--\eqref{bSiv} из замечания \ref{remRay}.
\end{remark}

\subsection{Доказательство теоремы A2}\label{A2} 
Части \eqref{vvii++R}, \eqref{vvi+R} и 
\eqref{vvii+++R} теоремы A2 --- частные случаи соответственно частей \eqref{vvii++},  \eqref{vvi+} и \eqref{vvii+++} теоремы \ref{thdsb_a} при $\beta-\alpha =2\pi$, где дополнение 
\eqref{fK:abp+0}  основано на условии \eqref{fK:abp} предложения \ref{LKcr}.    
В случае  $p\in \NN$ можно дать два варианта  вывода части \eqref{vvii+R} теоремы A2. 
В обоих вариантах ввиду $q=2p\in\NN$ основываемся на части \eqref{vvii+++}  теоремы \ref{thdsb_a}, согласно которой для доказательства   соотношения \eqref{llogv++} достаточно убедиться, что 
для заряда  Рисса $\nu_v$ функции $v\in \dsbh_*(\CC)$, представимой в виде разности двух {\it субгармонических функций конечного типа при порядке\/} $p$, выполнено 
условие Бляшке \eqref{ka+}, т.\,е. \eqref{ka+0}, рода $p$ в угле $\angle(0,2\pi)$. Из такого представления функции $v$ сразу следует, что для  $\nu_v$ выполнено условие Линделёфа \cite[определение 4.7]{KhI}
\begin{equation*} 
\Biggl|\;\int\limits_{D(r)\setminus D(r_0)} \frac{1}{z^p}
\dd \nu(z) \Biggr|=O(1) \quad\text{при $r\to +\infty$,} 
\end{equation*}
из которого ввиду равенства 
\begin{equation*}
\Biggl|\;\int\limits_{(D(r)\setminus D(r_0))\setminus \RR^+} \Im \frac{1}{z^{p}}\dd \nu (z)\Biggr|
=\Biggl|\; \Im \int\limits_{D(r)\setminus D(r_0)} \frac{1}{z^p}
\dd \nu(z) \Biggr|, \quad p\in \NN,
\end{equation*}
при {\it целом} $p\in \NN$ сразу следует условие Бляшке \eqref{ka+0} рода $p$ в угле $\angle(0,2\pi)$.

 Другой подход опирается на то, что для угла $\angle (0,2\pi)$ при $p\in \NN$ имеем четное $q=2p$  и  тогда $J_{0,2\pi}^{[p]}(r_0,r;v)\overset{\eqref{fK:abp+}}{\equiv} 0$ при $r\geq r_0$. Следовательно, автоматически выполнено условие  \eqref{fK:abp}, эквивалентное  условию 
Бляшке рода $p$ в угле $\angle  (0,2\pi)$  для заряда Рисса $\nu_v$, т.\,е. снова получаем требуемое условие Бляшке \eqref{ka+0} рода $p$ в угле $\angle (0, 2\pi )$.

\subsection{Интегралы от субгармонических функций по лучу}\label{Intr}
Результаты настоящего подраздела представляют и самостоятельный интерес, поскольку необходимость оценки роста интегралов по лучу от субгармонических функций  довольно часто возникает в теории функций. Поэтому мы устанавливаем их в гораздо более общей форме, чем требуется для наших целей в нашей работе: для субгармонических функций произвольного роста с произвольными положительными функциями-множителями. Техника оценок имеет параллели с доказательством леммы Эдрея и Фукса о малых дугах \cite{EF}, \cite[гл. 1, теорема 7.3]{GO}, леммы о малых интервалах А.\,Ф.~Гришина и М.\,Л.~Содина \cite[лемма 3.2]{GrS}   и ее аналогов \cite[теорема 8]{GrM}.   
  
Нам потребуется одна из оценок снизу для субгармонических функций, которые установлены в \cite{Kh84d}, \cite{Kh84} в гораздо более общей форме, чем необходимо в настоящем подразделе. Приведем здесь эту оценку снизу в существенно упрощенной форме, используя обозначение $D(z,r):=z+D(r)$ для открытого круга с центром в точке $z\in\CC$ радиуса $r\in \RR_*^+$.

\begin{propos}[{\rm \cite[теорема 1.2]{Kh84d}, \cite[теорема 2]{Kh84}}]\label{prsn}
Пусть $v\in \sbh_*(\CC)$, число $B>1$. Тогда существуют абсолютная постоянная $A\geq 1$
и  последовательность кругов $D(z_j,r_j)$, $j\in \NN$, для которых
\begin{subequations}\label{esl}
\begin{align}
v(z)&\overset{\eqref{Mvr}}{\geq} -M_v\bigl((1+1/B)|z|\bigr)AB\log (AB), \quad z\notin \bigcup_{j\in\NN}D(z_j,r_j), 
\tag{\ref{esl}a}\label{esla}
\\
 \sum_{|z_j|\leq r}r_j&\leq \frac{1}{B}\, r\quad \text{при всех $r>0$}. 
\tag{\ref{esl}b}\label{eslb}
\end{align}
\end{subequations}
\end{propos}
Предложение \ref{prsn} сразу следует из   \cite[теорема 1.2]{Kh84d}, \cite[теорема 2]{Kh84} при  постоянных функциях $A(t)\equiv B^2$, $\psi (t)\equiv 1/B$, $t\in \RR^+$, и числе $\beta=1$. 

\begin{propos}\label{disk}
Для любых двух чисел  $r_0, b\in \RR^+$ из \eqref{r0ab}
 можно подобрать  столь большое число $B>1$, что для любой последовательности кругов  $D(z_j,r_j)$, $j\in \NN$, удовлетворяющей  ограничению \eqref{eslb}, для любой точки $z\notin D(r_0)$ найдется число
$r_z>0$, удовлетворяющее условиям 
\begin{equation}\label{rqabD}
0< r_z \leq b|z|,
\quad  \biggl(\; \bigcup_{j\in\NN}D(z_j,r_j)\biggr)
\bigcap \partial D(z,r_z)
=\varnothing,
\end{equation}
т.\,е. окружность $\partial D(z,r_z)$ при некотором $r_z>0$ содержится в  круге $D\bigl(z, b|z|\bigr)$ и не пересекается ни с одним из кругов $D(z_j,r_j)$, $j\in \NN$.
\end{propos}Стандартное элементарное доказательство предложения  \ref{disk}, основанное на круговых проекциях  $D(z_j,r_j)$ на радиус круга $D\bigl(z,b|z|\bigr)$, опускаем.

\begin{propos}\label{lemrep}
Пусть $v\in \sbh_*(\CC)$ c мерой Рисса $\nu_v$. Тогда в  условиях \eqref{r0ab} найдется число $C\geq 1$, с которым справедлива оценка 
\begin{equation*}
\bigl|v(z)\bigr|\leq \int\limits_{(1-b)|z|}^{(1+b)|z|}
\log^+\frac{b |z|}{\bigl|r-|z|\bigr|} \dd \nu_v^{\rad}(r)+CM_v^+\bigl((1+2b)|z|\bigr)\quad \text{при всех $|z|>r_0$,}
\end{equation*}
где $\log^+r:=\max\{0,\log r \} $.
\end{propos}
\begin{proof} По определению \eqref{Mvr} из принципа максимума 
\begin{equation}\label{est_ab}
v(z)\leq M_v\bigl(|z|\bigr)\leq M_v^+\bigl((1+2b)|z|\bigr), \quad z\in \CC.
\end{equation} 
Исходя из оценки снизу \eqref{esla} предложения \ref{prsn} 
при достаточно большом $B$ по предложению \ref{disk}
найдется число $r_z\in \bigl(a|z|, b|z|\bigr)$, удовлетворяющее  согласно \eqref{rqabD} для некоторой постоянной $C\in \RR$, не зависящей от точки $z\notin D(r_0)$, оценке снизу 
\begin{multline}\label{es_rzM}
v(z')\geq -C M_v\bigl((1+1/B)|z'|\bigr)\geq -C M_v^+\bigl((1+1/B)(1+b)|z|\bigr)\\
\geq -C M_v^+\bigl((1+2b)|z|\bigr)
\quad \text{для всех $z'\in \partial D(z,r_z)$}.
\end{multline}
По формуле Пуассона\,--\,Йенсена \cite[3.7]{HK}, \cite[4.5]{Rans} для круга $D(z,r_z)$ имеем
\begin{equation}\label{fPJv}
v(z)=-\int_{D(z,r_z)} \log\Bigl|\frac{r_z^2}{r_z(z-z')}\Bigr|\dd \nu_v(z')+h(z)=:G(z)+h(z),
\end{equation}
где наименьшая гармоническая мажоранта $h$ функции $v$ в круге $D(z,r_z)$ ввиду \eqref{est_ab} и \eqref{es_rzM} удовлетворяет оценке 
\begin{equation}\label{esh}
\bigl|h(z)\bigr|\leq C M_v^+\bigl((1+2b)|z|\bigr),
\end{equation}
и, поскольку функция Грина под знаком интеграла c полюсом $z$ в центре круга в промежуточном равенстве  из \eqref{fPJv} положительна, имеем 
\begin{multline*}
\bigl|G(z)\bigr|\leq \int_{D(z,r_z)}
\log\frac{r_z}{|z-z'|}\dd \nu_v(z')\leq \int_{D(z,r_z)}\log^+\frac{b|z|}{|z-z'|}
\dd \nu_v(z') 
\\
\leq \int_{D(z,b|z|)}\log^+\frac{b|z|}{\bigl||z'|-|z|\bigr|}
\dd \nu_v(z')\leq \int\limits_{(1-b)|z|}^{(1+b)|z|}
\log^+\frac{b|z|}{\bigl|r-|z|\bigr|} \dd \nu_v^{\rad}(r).
\end{multline*} 
Из последней оценки, \eqref{esh} и \eqref{fPJv} следует    
 предложение \ref{lemrep}.
\end{proof}

\begin{theorem}\label{thr}
Пусть $v\in \sbh_*(\CC)$ c мерой Рисса $\nu_v$, $g\colon \RR_*^+\to \RR^+$ --- локально интегрируемая функция по мере Лебега $\lambda_{\RR}$ на $\RR^+$ и числа $r_0,b$ из  \eqref{r0ab}. Тогда найдется постоянная $C\geq 1$, с которой выполнена оценка
\begin{multline}\label{Intest}
\int\limits_{r_0}^R|v(t)|\, g(t) \dd t \leq 2R \int\limits_{(1-b)r_0}^{(1+b)R}\sup\bigl\{g(s)\colon (1-2b)r\leq s\leq (1+2b)r\bigr\}
 \dd \nu_v^{\rad}(r)\\+CM_v^+\bigl((1+2b)R\bigr)
\int\limits_{r_0}^R g(t) \dd t\quad \text{для всех $R\geq r_0$}.
\end{multline}
\end{theorem}
\begin{proof} Из оценки предложения \ref{lemrep} 
\begin{multline}\label{es1}
\int\limits_{r_0}^R|v(t)| g(t) \dd t \leq
\int\limits_{r_0}^R\Biggl(\; \int\limits_{(1-b)t}^{(1+b)t}
\log^+\frac{b t}{|r-t|} \dd \nu_v^{\rad}(r)+CM_v^+\bigl((1+2b)t\bigr)\Biggl)g(t)\dd t
\\
\leq \int\limits_{r_0}^R\Biggl(\; \int\limits_{(1-b)t}^{(1+b)t}
\log^+\frac{R}{|r-t|} \dd \nu_v^{\rad}(r)\Biggr) g(t)\dd t
+CM_v^+\bigl((1+2b)R\bigr)\int\limits_{r_0}^R g(t)\dd t\\
=I(r_0,R)+CM_v^+\bigl((1+2b)R\bigr)\int\limits_{r_0}^R g(t)\dd t
\quad\text{при всех $R\geq r_0$}, 
\end{multline}
где через $I(r_0,R)$ обозначен повторный интеграл перед равенством. Для его оценки поменяем интегралы местами:
\begin{multline}\label{esIrR}
I(r_0,R)\leq\int\limits_{(1-b)r_0}^{(1+b)R}\int_{\frac{r}{1+b}}^{\frac{r}{1-b}} \log^+\frac{R}{|t-r|}\,g(t)\dd t \dd \nu_v^{\rad}(r)\\
\leq\int\limits_{(1-b)r_0}^{(1+b)R}\int_{(1-2b)r}^{(1+2b)r} \log^+\frac{R}{|t-r|}\,g(t)\dd t \dd \nu_v^{\rad}(r).
\end{multline}
Используя замену $t-r=s$ в последнем внутреннем интеграле, можем продолжить эти неравенства как 
\begin{multline}\label{esIrR+}
\leq \int\limits_{(1-b)r_0}^{(1+b)R}\sup\bigl\{g(s)\colon (1-2b)r\leq s\leq (1+2b)r\bigr\}
\Biggl(\,2\int_0^{2br}\log^+\frac{R}{s} \dd s\Biggr) \dd \nu_v^{\rad}(r)\\
\leq \int\limits_{(1-b)r_0}^{(1+b)R}\sup\bigl\{g(s)\colon (1-2b)r\leq s\leq (1+2b)r\bigr\}
\Biggl(\,2\int_0^{R}\log^+\frac{R}{s} \dd s\Biggr) \dd \nu_v^{\rad}(r)
\\
=\int\limits_{(1-b)r_0}^{(1+b)R}\sup\bigl\{g(s)\colon (1-2b)r\leq s\leq (1+2b)r\bigr\}
2R \dd \nu_v^{\rad}(r).
\end{multline}
Таким образом, \eqref{es1},\eqref{esIrR}, \eqref{esIrR+} дают требуемую оценку \eqref{Intest}.
\end{proof}

\subsection{Доказательство теоремы A3}\label{prii}
Для {\it возрастающей\/}  $g$ из 
оценки \eqref{Intest} теоремы \ref{thr} следует цепочка неравенств 
 \begin{multline*}
\int\limits_{r_0}^R|v(t)|\, g(t) \dd t \leq 2R \int\limits_{(1-b)r_0}^{(1+b)R} g\bigl((1+2b)r\bigr)
 \dd \nu_v^{\rad}(r)+CM_v^+\bigl((1+2b)R\bigr) g(R)R
\\
\leq 2R\, g\bigl((1+4b)R\bigr)\nu_v^{\rad}\bigl((1+b)R\bigr)
+CM_v^+\bigl((1+2b)R\bigr) g(R)R
\\
\leq 2R\, g\bigl((1+4b)R\bigr)M_v^+\bigl((1+2b)R\bigr)\log\frac{1+2b}{1+b}\\
+CM_v^+\bigl((1+2b)R\bigr) g(R)R\quad \text{для всех $R\geq r_0$,}
\end{multline*}
 что доказывает \eqref{ginc}.

Для {\it убывающей\/} функции  $g$ из оценки \eqref{Intest} теоремы \ref{thr}, используя интегрирование по частям, получаем
\begin{multline}\label{ivtl}
\hspace{-1mm}\int\limits_{r_0}^R|v(t)|\, g(t) \dd t \leq 2R \int\limits_{(1-b)r_0}^{(1+b)R} g\bigl((1-2b)r\bigr)\dd \nu_v^{\rad}(r)+CM_v^+\bigl((1+2b)R\bigr)
\int\limits_{r_0}^R g(t) \dd t\\
\leq 2R\, g\bigl((1-2b)(1+b)R\bigr) \nu_v^{\rad}\bigl((1+b)R\bigr)+2R\int\limits_{(1-b)r_0}^{(1+b)R} 
\nu_v^{\rad}(r) \dd \, \Bigl(-g\bigl((1-2b)r\bigr)\Bigr)
\\+CM_v^+\bigl((1+2b)R\bigr) \int\limits_{r_0}^R g(t) \dd t
\leq 
2R\, g\bigl((1-b)R\bigr) M_v^+\bigl((1+2b)R\bigr)
\log\frac{1+2b}{1+b}\\
+2R M_v^+\bigl((1+2b)R\bigr) \log\frac{1+2b}{1+b}
\Bigl(-g\bigl((1-2b)(1+b)R\bigr)+g\bigl((1-2b)(1-b)r_0\bigr)\Bigr)\\
 +CM_v^+\bigl((1+2b)R\bigr) \int\limits_{r_0}^R g(t) \dd t
\quad \text{для всех $R\geq r_0$}.
\end{multline}
Ввиду строгой {\it положительности\/} функции $g$, очевидно,
\begin{equation}\label{iincr+}
\int_{r_0}^{R}g(t) \dd t \quad\text{--- {\it возрастающая\/} функция  по $R\geq r_0$}.
\end{equation}
Цепочку неравенств \eqref{ivtl} можем продолжить  как 
\begin{multline*}
\leq CM_v^+\bigl((1+2b)R\bigr)
\Bigl(C_bR\, g\bigl((1-b)R\bigr)+C_b'g\bigl((1-3b)r_0\bigr) 
+C\int\limits_{r_0}^R g(t) \dd t \Bigr)
\\
\leq CM_v^+\bigl((1+2b)R\bigr)
\Bigl(C_b\int_{r_0}^R g\bigl((1-b)r\bigr) \dd r
+C_br_0g\bigl((1-b)R\bigr)+C_b'g\bigl((1-3b)r_0\bigr) 
\Bigr.\\
\Bigl.+C\int\limits_{r_0}^R g(t) \dd t\Bigr)
\overset{\eqref{iincr+}}{\leq}
C_{b,r_0}M_v^+\bigl((1+2b)R\bigr)
\int_{r_0}^R g\bigl((1-b)r\bigr) \dd r
\quad\text{при $R\geq 2r_0$},  
\end{multline*}
где $C_b,C_b', C_{b,r_0}$ --- постоянные, не зависящие от $R\geq 2r_0$. Это дает \eqref{ubg}.

\subsection{Критерии вполне регулярного роста субгармонической  фу\-н\-к\-ц\-ии на системе лучей}\label{crcrS}

Известно, что субгармоническая функция конечного типа при порядке $p\in \RR_*^+$ имеет вполне регулярный рост на замкнутой системе лучей $S$ тогда и только тогда, когда она вполне регулярного роста на каждом из лучей из $S$ \cite[введение]{Aza69}. Поэтому один из критериев вполне регулярного роста на $S$ содержится в следующем пункте  

\subsubsection{Критерий вполне регулярного роста субгармонической  функции на одном луче}\label{crcrg+}

Для заряда $\nu\in \mathcal M(\RR^+)$ функцию распределения этого заряда на $\RR^+$ обозначаем через
$\nu^{\RR^+}(x)=\nu \bigl([0,x]\bigr)$, $x\in \RR^+$.

\begin{theorem}\label{thcrgr}
Функция  $v\in \sbh_*(\CC)$ с мерой Рисса $\nu_v$, $0\notin \supp \nu_v$, конечного типа при порядке $p\in \RR_*^+$ имеет вполне регулярный рост при том же порядке $p\in \RR_*^+$ на луче $\RR^+$ тогда и только тогда, когда для   выметания $(\nu_v)_{\RR^+}^{\bal [[2p]]}\in \mathcal M(\RR^+)$ рода $[2p]$ меры Рисса $\nu_v$ на $\RR^+$ и для некоторого исключительного подмножества $E\overset{\eqref{cvpreg+}}{\subset} \RR^+$ нулевой относительной линейной меры на $\RR^+$ существует предел 
\begin{equation}\label{cvpreg++}
\lim_{\substack{x\to +\infty\\ x\notin E}}
x^{[p]+1-p}\fint\limits_0^{+\infty} \frac{1}{x-t}
\, \Bigl((\nu_v)_{\RR^+}^{\bal [[2p]]}\Bigr)^{\RR^+} (t) \,\frac{\dd t}{ \, t^{[p]+1}} \in \RR.
\end{equation}
\end{theorem}
\begin{proof} Выметание  $v^{\Bal}$ на $\RR^+$ субгармонической функции $v$  может быть с зарядом Рисса 
$(\nu_v)_{\RR^+}^{\bal [[2p]]}\in \mathcal M(\RR^+)$, который удовлетворяет, как минимум, асимптотическому условию из п.~\eqref{vvii++R} теоремы A2 , и выметание  $v^{\Bal}$ совпадает с функцией $v$ почти всюду по мере Лебега на $\RR^+$. При этом справедливо представление Вейерштрасса\,--\,Адамара
\begin{equation}\label{iKp+}
v(x)=v^{\Bal}(x)=\int\limits_0^{+\infty}
K_{[p]} (x,t)\dd \, \Bigl((\nu_v)_{\RR^+}^{\bal [[2p]]}\Bigr)^{\RR^+} (t) +H(x) 
\end{equation}
для почти всех $x\in \RR^+$, где $H$ --- гармонический многочлен степени не выше $[p]$, уже являющийся функцией вполне регулярного роста на $\CC$.
Согласно \cite[предложение 6.3]{KhI} интеграл в правой части \eqref{iKp+} можно записать как 
\begin{equation}\label{rep:nKq+}
\fint\limits_0^{+\infty} 
\Re \frac{x^{[p]+1}}{t^{[p]+1}(x-t)} \, \Bigl((\nu_v)_{\RR^+}^{\bal [[2p]]}\Bigr)^{\RR^+} (t) \dd t.
\end{equation}
Определение вполне регулярного роста при порядке $p$ на луче $\RR^+$ функции $v$ означает существование конечного предела 
\begin{equation}\label{vl}
\lim_{\substack{x\to +\infty\\ x\notin E}} \frac{1}{x^p}v(x),
\end{equation}
где исключительное подмножество $E\overset{\eqref{cvpreg+}}{\subset} \RR^+$ нулевой относительной линейной  меры на $\RR^+$ <<не чувствует>> добавления множества нулевой меры Лебега на $\RR^+$. Согласно \eqref{iKp+}--\eqref{rep:nKq+}  существование предела \eqref{vl} эквивалентно существованию предела \eqref{cvpreg++}.
\end{proof}

\begin{remark}
Теорема A4 из п.~\ref{crcrg} сразу следует из теоремы \ref{thcrgr} при $v=\log |f|$ и $\nu_v=n_{\sf Z}$.
\end{remark}

\subsubsection{Критерий вполне регулярного роста субгармонической  функции на конечной системе лучей}\label{crcfS+}
Для произвольной конечной системы лучей 
\begin{equation}\label{lk1_k}
\begin{split}
l_j:=l(\theta_j):=\{&te^{i\theta_j}\in \CC\colon t\in \RR^+\},  
\\ 
j=1, \dots, k\in \NN, \quad &\theta_1<\dots <\theta_k<\theta_1+2\pi,	
\end{split}
\end{equation}
для субгармонической функции $v\in \sbh_*(\CC)$ конечного типа при порядке $p\in \RR_*^+$ c мерой Рисса $\nu$ также конечного типа при порядке $p$, комбинируя выметания различного рода выметания из дополнительных к системе $S=\{l_j\colon j=1,\dots , k\in \NN\}$, как в замечаниях \ref{remRay} и \ref{remRay+},  можно явно построить выметание $v^{\Bal}_S\in \dsbh_*(\CC)$ функции $v$ на $S$ c зарядом Рисса $\nu^{\Bal}_S$
c носителем на $S$ и удовлетворяющим, как минимум, условию 
$|\nu^{\Bal}|^{\rad}(r)=O(r^p\log r)$ при $r\to +\infty$.  
Обозначим функции распределения на лучах $l_j$ из \eqref{lk1_k} выметания $\nu^{\Bal}$ через 
\begin{equation}\label{nball}
n_j(t):=\nu^{\Bal}\bigl(l_j\cap \overline{D}(t)\bigr).
\end{equation}

В приведенных соглашениях и обозначениях п.~\ref{crcfS+} имеет место
\begin{theorem}\label{thcrgfS}
Пусть $S\overset{\eqref{lk1_k}}{=}\{l_j\}_{j=1,\dots,k}$. Функция $v$ вполне регулярного роста на $S$ при порядке $p$, если  и только если существуют множества $E_j\overset{\eqref{cvpreg+}}{\subset} \RR^+$, $j=1,\dots,k$,  нулевой относительной линейной меры, для которых с функциями распределения $n_j$ 
из \eqref{nball} существуют $k$ пределов
\begin{equation}\label{c:vprreg}
\lim_{\substack{r\to +\infty\\r\notin E_j}} {r^{[p]+1-p}}\sum_{j'=1}^{k} \fint_0^{+\infty}\Re \frac{e^{i([p]+1)\theta_{j}}}{t^{[p]+1}(re^{i\theta_j}-t)} \, n_{j'}(t) \dd t, \quad j=1,\dots,k.
\end{equation}
\end{theorem}
Доказательство опускаем, поскольку оно повторяет таковое из \cite[6.4.2, доказательство теоремы 10]{KhI}.


\end{document}